\newcommand\ie{{\em i.e.}~}
\def\A{{\mathscr A}}
\def\a{\mathrm a}
\def\b{\mathrm b}
\def\c{\mathrm c}
\def\s{\mathrm s}
\def\t{\mathrm t}
\def\ss{\sigma}
\def\tt{\tau}
\def\ha{{\tilde \a}}
\def\hb{{\tilde \b}}
\def\hc{{\widehat \c}}
\def\haa{{\tilde \aa}}
\def\hbb{{\tilde \bb}}
\def\hcc{{\widehat \cc}}
\def\T{\mathbb{T}}
\def\la{\overrightarrow{\a}}
\def\ra{\overleftarrow{\a}}
\def\laa{\overrightarrow{\aa}}
\def\raa{\overleftarrow{\aa}}
\def\o{\omega}
\def\G{{\sf G}}
\def\hG{\tilde{\sf G}}
\def\C{\mathscr{C}}
\def\B{{\mathscr B}}
\def\H{\mathcal H}
\def\M{{\mathcal M}}
\def\S{\mathcal S}
\def\U{\mathcal U}
\def\BB{\mathfrak B}
\def\CC{{\mathfrak C}}
\def\s{{\mathsf s}}
\def\fa{\omega}
\def\fb{\nu}
\def\fc{\theta}
\def\hfa{\widehat{\omega}}
\def\hfb{\widehat{\nu}}
\def\hfc{\widehat{\theta}}
\def\Bo{\mathscr{B}}
\def\({\left(}
\def\[{\left[}
\def\){\right)}
\def\]{\right]}
\def\si{\sigma}
\def\Si{\Sigma}
\def\Aut{\mathsf{Aut}}
\def\ad{\mathsf{ad}}
\def\1{\mathfrak{1}}
\def\0{\mathfrak{0}}
\def\p{\parallel}
\def\k{\kappa}
\def\<{\langle}
\def\>{\rangle}
\def\aa{\alpha}
\def\bb{\beta}
\def\cc{\gamma}
\providecommand{\CC}{\mathfrak{C}}
\newtheorem{Theorem}{Theorem}[section]
\newtheorem{Remark}[Theorem]{Remark}
\newtheorem{Lemma}[Theorem]{Lemma}
\newtheorem{Corollary}[Theorem]{Corollary}
\newtheorem{Proposition}[Theorem]{Proposition}
\newtheorem{Definition}[Theorem]{Definition}
\newtheorem{Example}[Theorem]{Example}
\numberwithin{equation}{section}
\begin{document}


\title{$C^*$-Algebraic Covariant Structures}

\date{\today}

\author{H. Bustos  and M. M\u antoiu \footnote{
\textbf{2010 Mathematics Subject Classification: Primary 47L65, Secundary 46L55.}
\newline
\textbf{Key Words:} Dynamical system, crossed product, group cocycle, $C^*$-algebra, Takai duality.  }
}
\date{\small}
\maketitle \vspace{-1cm}


\begin{abstract}
We introduce {\it covariant structures} $\left\{(\A,\k),(\a,\aa),\(\ha,\haa\)\right\}$ formed of a separable $C^*$-algebra $\A$, a measurable twisted action $(\a,\aa)$ of the second-countable locally compact group $\G$\,, a measurable twisted action $(\ha,\haa)$ of another second-countable locally compact group $\hG$ and a strictly continuous function $\k:\G\times\hG\to\U\M(\A)$ suitably connected with $(\a,\aa)$ and $\(\ha,\haa\)$\,. Natural notions of covariant morphisms and representations are considered, leading to a sort of twisted crossed product construction. Various $C^*$-algebras emerge by a procedure that can be iterated indefinitely and that also yields new pairs of twisted actions. Some of these $C^*$-algebras are shown to be isomorphic. The constructions are non-commutative, but are motivated by Abelian Takai duality that they eventually generalize.
\end{abstract}

\section*{Introduction}\label{duci}

Let $\A$ be a separable $C^*$-algebra with automorphism group ${\sf Aut}(\A)$, multiplier algebra $\M(\A)$ and unitary group $\U\M(\A)$ and let $\G,\hG$ be two second contable locally compact groups, with units ${\sf e}$ and $\varepsilon$ and left Haar measures $dx$ and $d\xi$ respectively. Let also $(\a,\aa)$ be a measurable twisted action of $\G$ on $\A$ and $(\ha,\haa)$ a measurable twisted action of $\hG$ on $\A$. Motivated by duality issues, we are going to investigate this pair of twisted actions in the presence of a "coupling function" $\k:\G\times\hG\to\U\M(\A)$, supposed strictly continuos. 

The simple motivating example is given by the setting involved in the well-known (Abelian) Takai duality result \cite{Ta,Tak,Ra1,Wi}. In this case $\G$ is supposed to be commutative, $\G\equiv\widehat\G$ is its Pontryagin dual and $\k(x,\xi):=\xi(x)$ is obtained by applying the character $\xi$ to the element $x$\,.  The theory starts with a single action $\a$ of the group $\G$ (let us assume it untwisted), used to construct \cite{E,EKQR,Pe,Wi} the crossed product $\B:=\A\!\rtimes\!_\a\G$\,. On this new $C^*$-algebra there is a canonical action $\widehat\b^0$ of the dual group given on elements $f$ of the dense $^*$-subalgebra $L^1(\G;\A)$ by
$$
\big[\widehat\b^0_\xi(f)\big](x):=f(x)\overline{\xi(x)}=f(x)\overline{\k(x,\xi)}\,,\ \quad\forall\,x\in\G\,,\,\xi\in\widehat\G\,.
$$
Takai's duality result states that the second generation crossed product $(\A\!\rtimes\!_\a\G)\!\rtimes_{\widehat\b^0}\!\widehat\G$ is isomorphic to the tensor product $\A\otimes\mathbb K[L^2(\G)]$ between the initial $C^*$-algebra $\A$ and the $C^*$-algebra of compact operators on the Hilbert space $L^2(\G)$\,; this isomorphism is equivariant with respect to the canonial bi-dual action on $(\A\!\rtimes\!_\a\G)\!\rtimes_{\widehat\b^0}\!\widehat\G$ and a natural product action on $\A\otimes\mathbb K[L^2(\G)]$\,.

On the other hand, this dual action is not enough if one wants to fully connect the $C^*$-algebra $\B$ with the initial $C^*$-dynamical system $(\A,\a,\G)$\,. There is also a natural strictly continuous group morphism $\lambda:\G\to\U\M(\B)$ (basically $\lambda_x=\delta_x\otimes 1$ in a suitable picture of the multiplier algebra of $\B$) and the covariance relation 
$$
\widehat\b^0_\xi(\lambda_x)=\k(x,\xi)\lambda_x
$$
holds for each $x\in\G$ and $\xi\in\widehat\G$\,. The couple $(\widehat\b^0,\lambda)$ plays an important role \cite{La,Pe} in Landstad's characterizations of the $C^*$-algebras that are isomorphic to a crossed product with group $\G$\,. But $\lambda$ can also be seen as defining an action
$$
\b:={\sf ad}_\lambda:\G\to{\sf Aut}(\B)\,,\quad\b_x(f)={\sf ad}_{\lambda_x}\!(f)\equiv\lambda_x\diamond f\diamond\lambda_{x}^\diamond\,,
$$
where $\diamond$ denotes the composition law and $^\diamond$ the involution in the (multiplier algebra of the) crossed product. Finally $\B$ comes equipped with the two actions $\b$ of the group $\G$ and $\widehat\b^0$ of the group $\widehat\G$\,. If the initial action $\a$ is twisted by a $2$-cocycle $\aa$\,, then $\lambda$ will no longer be a group morphism and $\b$ will also aquire a $2$-cocycle
$$
\bb:\G\times\G\to\U\M(\B)\,,\quad\bb(x,y):=\lambda_x\diamond\lambda_y\diamond\lambda_{xy}^\diamond\,.
$$
In addition, if initially there is also a twisted action $(\widehat\a,\widehat\aa)$ of the dual group $\widehat\G$ on $\A$, this can be converted in a modification of $\widehat\b^0$ into
$$
[\widehat\b_\xi(f)](x):=\widehat\a_\xi[f(x)]\overline{\kappa(x,\xi)}
$$
and this formula also requires a $2$-cocycle $\widehat\bb(\cdot,\cdot):=1\otimes\widehat\aa(\cdot,\cdot)$ on $\widehat\G$\,. 

The conclusion is that, for the Pontryagin couple $(\G,\widehat\G)$\,, a pair of twisted actions $\big((\a,\aa,\G),(\widehat\a,\widehat\aa,\widehat\G))$ on $\A$ generates
a pair of twisted actions $\big((\b,\bb,\G),(\widehat\b,\widehat\bb,\widehat\G))$ on the twisted crossed product \cite{BS,PR1,PR2} $\B:=\A\!\rtimes_\a^\aa\!\G$\,. A different but similar pair of twisted actions $\big((\c,\cc,\G),(\widehat\c,\widehat\cc,\widehat\G))$ arises in the same way on the other twisted crossed product $\C:=\A\!\rtimes_{\widehat\a}^{\widehat\aa}\!\widehat\G$\,. Thus two new $C^*$-algebras are available: $(\A\!\rtimes_\a^\aa\!\G)\!\rtimes_{\widehat\b}^{\widehat\bb}\!\widehat\G$ and $(\A\!\rtimes_{\widehat\a}^{\widehat\aa}\!\widehat\G)\!\rtimes_\c^\cc\!\G$\,. A very particular case of results of our section \ref{cortandui} says that they are isomorphic in a canonical very explicit way, and this implies easily an extension of Takai's result that is recovered for $\widehat\a={\sf id}$\,, $\aa=1$ and $\widehat\aa=1$\,.

The article is dedicated to extend this picture in a non-commutative setting (but since coactions are not involved, we do not obtain non-commutative versions of Takai duality \cite{IT,Qu,Ra2}). Hopefully we are going to develop and apply this elsewhere.

The first section recalls some basic facts about twisted crossed products and their unitary multipliers. 

In the second section we introduce {\it covariant structures} $\left\{(\A,\k),(\a,\aa),\(\ha,\haa\)\right\}$ formed of a separable $C^*$-algebra $\A$, a measurable twisted action $(\a,\aa)$ of the second-countable locally compact group $\G$\,, a measurable twisted action $(\ha,\haa)$ of the second-countable locally compact group $\hG$ and a strictly continuous function $\k:\G\times\hG\to\U\M(\A)$\,. We insist on the fact that $\A,\G,\hG$ can be non-commutative and the two groups $\G$ and $\hG$ are very weakly connected. At the begining we worked under rather strong assumptions: $\k$ was supposed to be a bi-character, the two "actions" $\a$ and $\ha$ were supposed to commute and each cocycle was taken to have values in the fixed-point algebra associated to the other action. Then we succeeded to isolate a much more general compatibility assumption connecting the five objects $\k,\a,\aa,\ha,\haa$\,, that is quite meaningful and allows all the subsequent developments.

In section \ref{vizigot}, this compatibility assumption is used to associate to the covariant structure $\left\{(\A,\k),(\a,\aa),\(\ha,\haa\)\right\}$ two (exterior equivalent) twisted actions $(\la,\laa)$ and $(\ra,\raa)$ of the product group $\G\times\hG$ on $\A$\,.

In section \ref{vizigott} we define the (twisted crossed) {\it bi-product} of a covariant structure $\left\{(\A,\k),(\a,\aa),\(\ha,\haa\)\right\}$ by an universal property involving {\it covariant morphisms}; these are triples $(r,u,v)$ such that $(r,u)$ is a covariant morphism of the twisted $C^*$-dynamical system $(\A,\a,\aa,\G)$\,, $(r,v)$ is a covariant morphism of the twisted $C^*$-dynamical system $(\A,\ha,\haa,\hG)$ and the commutation between $u_x$ and $v_\xi$ is ruled by the coupling function $\k$\,. Since such covariant morphisms are rigidly related to usual covariant morphisms of the twisted action $(\la,\laa)$\,, existence of bi-products follows easily from the theory of twisted crossed products; one can see $\A\!\rtimes_{\la}^{\laa}\!(\G\times\hG)$ as one of its possible realizations.

The remaining part of the paper is dedicated to other realizations, involving iterated twisted crossed products; this will make the connection with the first half of the Introduction.

In section \ref{vipsgrot}, associated to a covariant structure $\left\{(\A,\k),(\a,\aa),\(\ha,\haa\)\right\}$\,, we introduce the first generation covariant structures $\left\{(\A\!\rtimes_\a^\aa\!\G,{\sf k}),(\b,\bb),\(\hb,\hbb\)\right\}$ and $\left\{(\A\!\rtimes_{\tilde\a}^{\tilde\aa}\!\tilde\G,\tilde{\sf k}),(\c,\cc),\(\tilde\c,\tilde\cc\)\right\}$ and then the second generation twisted crossed products $(\A\!\rtimes_\a^\aa\!\G)\!\rtimes_{\tilde\b}^{\tilde\bb}\!\tilde\G$ and $(\A\!\rtimes_{\tilde\a}^{\tilde\aa}\!\tilde\G)\!\rtimes_\c^\cc\!\G$\,. Checking the axioms relies heavily on the compatibility assumption between $\k,\a,\aa,\ha,\haa$\,.

The main result is contained in section \ref{cortandui}. It is shown that the following isomorphisms hold
\begin{equation}\label{mein}
\A\!\rtimes_{\la}^{\laa}\!(\G\times\hG)\cong\A\!\rtimes_{\ra}^{\raa}\!(\G\times\hG)\cong(\A\!\rtimes_\a^\aa\!\G)\!\rtimes_{\tilde\b}^{\tilde\bb}\!\tilde\G\cong(\A\!\rtimes_{\tilde\a}^{\tilde\aa}\!\tilde\G)\!\rtimes_\c^\cc\!\G\,.
\end{equation}
This is obtained both by studying the covariant representations of all the structures involved and (for explicitness) by comparing the concrete form of the composition laws. All the four algebras above can be regarded as realizations of the bi-product attached to the covariant structure $\left\{(\A,\k),(\a,\aa),\(\ha,\haa\)\right\}$\,. The isomorphisms in (\ref{mein}) even hold in the category of covariant structures.

Some examples are presented in section \ref{corsandui}. In particular, it is shown how a twisted version of the Abelian duality result can be deduced from the last isomorphism in (\ref{mein}).

\newpage

\section{Twisted actions}\label{fourtundoy}

\medskip
\begin{Definition}\label{carcharoth}
{\rm A twisted action} of the locally compact group $\,\G$ on the $C^*$-algebra $\A$ is a pair $(\mathrm a,\alpha)$ composed of mappings $\mathrm a:\G\rightarrow\Aut(\A)$ and $\alpha:\G\times\G\rightarrow\mathcal U\mathcal M(\A)$ such that
\begin{equation*}\label{elwe}
\mathrm a_{\sf e}={\rm id}_\A\,,\quad\ \mathrm a_x\circ\mathrm a_y={\sf ad}_{\alpha(x,y)}\circ\mathrm a_{xy}\,,\ \quad\forall\,x,y\in\G\,,
\end{equation*}
\begin{equation*}\label{manwe}
\alpha(x,{\sf e})=1=\alpha({\sf e},x)\,,\ \quad\forall\,x\in\G\,,
\end{equation*}
\begin{equation*}\label{orome}
\alpha(x,y)\,\alpha(xy,z)=\mathrm a_{x}[\alpha(y,z)]\,\alpha(x,yz)\,,\ \quad\forall\,x,y,z\in\G\,.
\end{equation*}
If $\mathrm a$ is strongly measurable and $\alpha$ is strictly measurable we speak of {\rm a measurable twisted action}. 
If $\mathrm a$ is strongly continuous and $\alpha$ is strictly continuous we speak of {\rm a continuous twisted action}. 
\end{Definition}

\medskip
To a measurable twisted action $(\a,\aa)$ of the group $\sf G$ on the $C^*$-algebra $\A$\ one associates \cite{BS,PR1} the Banach $^*$-algebra $L^1_{\a,\aa}(\G;\A)\equiv L^1(\G;\A)$ (cf. \cite[App. B]{Wi}) and its enveloping $C^*$-algebra, the twisted crossed  product $\A\!\rtimes_\a^\aa\!\G$\,. The norm on $L^1(\G;\A)$ is $\p\!f\!\p_{1}\,:=\!\int_\G\!dx\!\p\!f(x)\!\p_\A$\,. The composition laws are
\begin{equation*}\label{ascar}
\begin{aligned}
\(f\diamond g\)\!(x):=\int_\G\!dy\,f(y)\,\mathrm a_y\!\[g(y^{-1}x)\]\alpha(y,y^{-1}x)\,,
\end{aligned}
\end{equation*}
\begin{equation*}\label{amdram}
f^{\diamond}(x):=\Delta_\G(x)^{-1}\aa(x,x^{-1})^*\a_x[f(x^{-1})^*]\,.
\end{equation*}

We recall that the non-degenerate representations of $\A\!\rtimes_\a^\aa\!\G$ are in one-to one correspondence with covariant representations of the twisted $C^*$-dynamical system $(\A,\a,\aa)$\,. These are triples $(\H,\pi,U)$ where $\H$ is a Hilbert space, $\pi:\A\rightarrow\mathbb B(\H)$ a non-degenerate representation of $\A$ by bounded operators in $\H$ and $U:\G\to\mathbb U(\H)$ a strongly measurable map whose values are unitary operators in $\H$\,, satisfying
\begin{equation*}\label{ainulindale}
U_xU_y=\pi[\aa(x,y)]U_{xy}\,,\quad\ \forall\,x,y\in\G\,,
\end{equation*}
\begin{equation*}\label{erendil}
U_x\pi(A)U_x^*=\pi[\a_x(A)]\,,\quad\ \forall\,x\in\G\,,\,A\in\A\,.
\end{equation*}
The representation $\pi\!\rtimes\!U$ corresponding to $(\H,\pi,U)$ (its integrated form) acts on $f\in L^1(\G;\A)$ as
\begin{equation*}\label{balegar}
(\pi\!\rtimes\!U)f:=\int_\G\!dx\,\pi[f(x)]\,U_x\,.
\end{equation*}

We also recall that {\it a covariant morphism} of $(\A,\a,\aa)$ \cite[Sect. 1]{PR2} is composed of a $C^*$-algebra $\B$\,, a non-degenerate morphism $r:\A\to\M(\B)$ and a strictly measurable map $u:\G\to\U\M(\B)$ satisfying for $x,y\in\G$ and $A\in\A$ the relations 
\begin{equation*}\label{flamanzila}
u_x r(A)u_x^*=r[\a_x(A)]\,,\quad\ u_x u_y=r[\aa(x,y)]u_{xy}\,.
\end{equation*}

\begin{Remark}\label{strigoy}
{\rm Defining the twisted crossed product as the enveloping $C^*$-algebra of the $L^1$ Banach algebra will be convenient in the setting of our article. Occasionally we are going to use the fact that this enveloping algebra has universal properties (cf. \cite[Sect. 2]{PR1} and \cite[Sect. 1]{PR2}), which can be used as alternative definitions. 
}
\end{Remark}

\medskip
Some considerations about unitary multipliers of twisted crossed products will be needed.
It is true \cite[Prop. 4.19]{Bu} that all the unitary multipliers of $L^1_{\a,\aa}(\G;\A)$ have the form $\delta_z\otimes m$\,, where $\delta_z$ is the Dirac measure in $z\in\G$ and $m\in\mathcal U\mathcal M(\A)$\,. One can find in \cite{Bu} many other results about the interpretation of multiplier-valued regular measures on $\G$ with bounded variation as (left or bi-sided) multipliers on $L^1_{\a,\aa}(\G;\A)$\,. Since we only need simple facts, and since the connection between the multipliers of a Banach $^*$-algebra and the multipliers of its enveloping $C^*$-algebra can be murky even in simple situations \cite{Lai}, we are going to give an independent treatment.

If $z\in\G$ and $m$ is a multiplier of $\A$\, the meaning of $\delta_z\otimes m$ as a measure with values in $\mathcal M(\A)$ is obvious. To it we associate the operators $(\delta_z\otimes m)_l,(\delta_z\otimes m)_r:L^1(\G;\A)\rightarrow L^1(\G;\A)$ given by
\begin{equation}\label{ossiriand}
\[(\delta_z\otimes m)_l g\]\!(x)\equiv\[(\delta_z\otimes m)\diamond g\]\!(x):=m\,\a_z\!\[g(z^{-1}x)\]\aa(z,z^{-1}x)\,,
\end{equation}
\begin{equation}\label{dorthonion}
\[(\delta_z\otimes m)_r f\]\!(x)\equiv\[f\diamond(\delta_z\otimes m)\]\!(x):=f(xz^{-1})\,\a_{xz^{-1}}(m)\,\aa(xz^{-1},z)\,.
\end{equation}
One checks easily that $\left\{(\delta_z\otimes m)_l,(\delta_z\otimes m)_r\right\}$ is a double centralizer of the Banach $^*$-algebra $L^1_{\a,\aa}(\G;\A)$\,, i.e. 
\begin{equation}\label{athrabeth}
f\diamond\[(\delta_z\otimes m)_l g\]=\[(\delta_z\otimes m)_r f\]\diamond g\,,\quad\ \forall\,f,g\in L^1(\G;\A)\,.
\end{equation}
The particular case $z={\sf e}$ is worth mentioning:
\begin{equation}\label{himlad}
\[(\delta_{\sf e}\otimes m)\diamond f\diamond(\delta_{\sf e}\otimes n)\]\!(x)=mf(x)\,\a_x(n)\,.
\end{equation}

From now on we assume that $m$ is a unitary multiplier of $\A$\,. To show that $\delta_z\otimes m$ extends to a multiplier of the full twisted crossed product, one has to examine its behavior under the integrated form $\Pi:=\pi\!\rtimes\!U$ of an arbitrary covariant representations $(\pi,U,\H)$\,. One has
$$
\begin{aligned}
\Pi\[(\delta_z\otimes m)_l g\]&=\int_\G \!dx\,\pi\!\left\{m\,\a_z[g(z^{-1}x)]\aa(z,z^{-1}x)\right\}U_x\\
&=\pi(m)\,U_z\!\int_\G \!dx\,\pi[g(z^{-1}x)]\,U_z^*\,\pi\!\[\aa(z,z^{-1}x)\]U_x\\
&=\pi(m)\,U_z\!\int_\G \!dy\,\pi[g(y)]\,U_z^*\,\pi\!\[\aa(z,y)\]U_{zy}\\
&=\pi(m)\,U_z\!\int_\G \!dy\,\pi[g(y)]\,U_y=\pi(m)\,U_z\,\pi(g)\,.
\end{aligned}
$$
Then, since $U_z$ and $\pi(m)$ are unitary operators, one gets
$$
\p\!\Pi\[(\delta_z\otimes m)_l g\]\!\p_{\mathbb B(\H)}\,=\,\p\!\Pi(g)\!\p_{\mathbb B(\H)}
$$
so $(\delta_z\otimes m)_l$ extends to an isometry of the enveloping $C^*$-algebra $\A\!\rtimes_\a^\aa\G$\,.
A similar statement holds for $(\delta_z\otimes m)_r$\,, based on the identity $\Pi\[(\delta_z\otimes m)_r f\]=\Pi(f)\,\pi(m)U_z$\,.
Then, by continuity and density, the two extensions form a double centralizer of $\A\!\rtimes_\a^\aa\!\G$\,.

A shorter way to express the two computations above is to write $(\pi\!\rtimes\!U)(\delta_z\otimes m)=\pi(m)U_z$\,.
One can deduce from this (or from many other arguments) the algebra of these unitary multipliers:
\begin{equation}\label{mistymountains}
(\delta_y\otimes n)\diamond(\delta_z\otimes m)=\delta_{yz}\otimes \[n\a_y(m)\aa(y,z)\],
\end{equation}
\begin{equation}\label{huan}
(\delta_z\otimes m)^\diamond=\delta_{z^{-1}}\otimes \[\aa(z^{-1},z)^*\a_{z^{-1}}(m^*)\].
\end{equation}
Later on we are going to need the particular case
\begin{equation}\label{ladros}
(\delta_{\sf e}\otimes m)^{\diamond}=\delta_{\sf e}\otimes m^*.
\end{equation}

We close this section with two remarks that will be useful later.

\medskip
\begin{Remark}\label{ghiveci}
{\rm Let $\G,\hG$ be two locally compact groups and $(c,\gamma)$ a twisted action of $\G\times\hG$ on the $C^*$-algebra $\A$\,. Define $c^\dagger$ and $\gamma^\dagger$ respectively by $c^\dagger_{(\xi,x)}:=c_{(x,\xi)}$ and $\gamma^\dagger\big((\xi,x),(\eta,y)\big):=\gamma\big((x,\xi),(y,\eta)\big)$\,. Then $(c^\dagger,\gamma^\dagger)$ is a twisted action of the group $\hG\times\G$ on $\A$\,. The twisted crossed products $\A\!\rtimes_\c^\gamma\!(\G\times\hG)$ and $\A\!\rtimes_{\c^\dagger}^{\gamma^\dagger}\!(\hG\times\G)$ are isomorphic and at the level of $L^1$-elements the isomorphism is just composing with the flip $(x,\xi)\to(\xi,x)$\,.
}
\end{Remark}

\medskip
\begin{Remark}\label{duniath}
{\rm We say that the two twisted actions $(\b,\beta)$ and $(\b',\beta')$ are {\it exterior equivalent} \cite{PR1} if there exists a strictly measurable map (a normalized $1$-cochain) $q:{\sf G}\rightarrow\mathcal U\mathcal M(\A)$ such that $q({\sf e})=1$ and
\begin{equation*}\label{melian}
\b'_x={\sf ad}_{q_x}\circ\b_x\,,\ \quad\forall\,x\in{\sf G}\,,
\end{equation*}
\begin{equation*}\label{thingol}
\beta'(x,y)=q_x\b_x(q_y)\beta(x,y)q_{xy}^*\,,\ \quad\forall\,x,y\in{\sf G}\,.
\end{equation*}
In such a situation we are going to write $(\b,\beta)\overset{q}{\sim}(\b',\beta')$\,. It is easy to see that $\sim$ is an equivalence relation.

Let us suppose that $(\b,\beta)\overset{q}{\sim}(\b',\beta')$\,. Then \cite[Lemma 3.3]{PR1} the twisted crossed products $\A\!\rtimes_\b^\beta\!{\sf G}$ and $\A\!\rtimes_{\b'}^{\beta'}\!{\sf G}$ are canonically isomorphic. At the level of $L^1({\sf G};\A)$ the isomorphism acts as $[\iota_q(f)](x):=f(x)q_x^*$\,.
 }
\end{Remark}

\section{Covariant structures}\label{fourdoy}

Two second countable locally compact group are given: $\G$ with elements $x,y,z$\,, unit ${\sf e}$ and Haar measure $dx$ and $\hG$ which has elements $\xi,\eta,\zeta$\,, unit $\varepsilon$ and Haar measure $d\xi$\,. The next definition is provisory; the really useful concept is that of Definition \ref{stalceala}.

\medskip
\begin{Definition}\label{cacarot}
A {\rm semi-covariant structure} $\left\{(\A,\k),(\a,\aa),\(\ha,\haa\)\right\}$ is given by a separable $C^*$-algebra $\A$ endowed with two measurable twisted action $(\a,\aa)$ of $\,\G$ and $(\ha,\haa)$ of $\,\hG$\, respectively, and with a strictly continuous map 
\begin{equation*}\label{zdrelita}
\G\times\hG\ni(x,\xi)\mapsto\k(x,\xi)\in\U\M(\A)
\end{equation*}
satisfying the normalization conditions 
\begin{equation*}\label{strivita}
\k({\sf e},\xi)=1=\k(x,\varepsilon)\,,\quad\ \forall\,x\in\G\,,\,\xi\in\hG\,.
\end{equation*}  
\end{Definition}

When extra regularity properties (as continuity) of the twisted actions will be present, this will usually be specified. One could call $\k$ {\it the coupling function}.

\medskip
\begin{Definition}\label{stirba}
We call {\rm covariant morphism of the semi-covariant structure} $\left\{(\mathscr A,\k),(\a,\aa),(\tilde\a,\tilde\aa)\right\}$ a quadruplet $(\B,r,u,v)$ where 
\begin{enumerate}
\item
$(\mathscr B,r,u)$ is a covariant morphism of the twisted $C^*$-dynamical system $(\A,\a,\aa)$ with group $\G$\,, 
\item
$(\B,r,v)$ is a covariant morphism of the twisted $C^*$-dynamical system $(\A,\ha,\haa)$ with group $\tilde\G$\,,
\item
the commutation relation $u_x v_\xi=r[\k(x,\xi)] v_\xi u_x$ holds for every $(x,\xi)\in\G\times\hG$\,.
\end{enumerate}
If $\,\B=\mathbb K(\H)\,$ for some Hilbert space $\H$ (thus $\M(\B)=\mathbb B(\H)$) we speak of {\rm a covariant representation} and we use notations as $(\H,\pi,U,V)$\,.
\end{Definition}

\medskip
Let us investigate under which assumptions convenient covariant morphisms exists. For a hypothetical one $(\B,r,u,v)$ with {\it faithful} $r$ and for $A\in\A,x\in\G\,,\xi\in\hG$ one has
$$
\begin{aligned}
(v_\xi u_x) r(A)(v_\xi u_x)^*&=v_\xi r[\a_x(A)]v_\xi^*=r\{\ha_\xi[\a_x(A)]\}
\end{aligned}
$$
but also
$$
\begin{aligned}
v_\xi u_x r(A)(v_\xi u_x)^*&=r[\k(x,\xi)^*]\,u_x v_\xi\,r(A)\,v_\xi^* u_x^*\,r[\k(x,\xi)]\\
&=r[\k(x,\xi)^*]\,u_x r[\ha_\xi(A)] u_x^*\,r[\k(x,\xi)]\\
&=r\big\{\k(x,\xi)^*\a_x[\ha_\xi(A)]\k(x,\xi)\big\}\,.
\end{aligned}
$$
it follows that for all $x,\xi$ one must have
\begin{equation}\label{dihanie}
\a_x\circ\ha_\xi=\ad_{\k(x,\xi)}\circ\ha_\xi\circ\a_x\,,
\end{equation}
so $\ad_{\k(\cdot,\cdot)}$ measures the non-commutativity of the actions. If $\k$ is center-valued the actions do commute.

Now, for arbitrary $x,y\in\G$\,, $\xi,\eta\in\hG$ let us compute $v_\xi u_x v_\eta u_y$ in two ways. First
$$
\begin{aligned}
v_\xi u_x v_\eta u_y&=v_\xi r[\k(x,\eta)]v_\eta u_x u_y \\
&=r\!\left\{\ha_\xi\!\[\k(x,\eta)\]\right\}v_\xi v_\eta u_x u_y\\
&=r\!\left\{\ha_\xi\!\[\k(x,\eta)\]\right\}r[\haa(\xi,\eta)]v_{\xi\eta}\,r[\aa(x,y)]u_{xy}\\
&=r\big\{\ha_\xi\!\[\k(x,\eta)\]\haa(\xi,\eta)\,\ha_{\xi\eta}\!\[\aa(x,y)\]\!\big\}v_{\xi\eta}u_{xy}\,.
\end{aligned}
$$
But on the other hand
$$
\begin{aligned}
v_\xi u_x v_\eta u_y&=r[\k(x,\xi)^*]u_x v_\xi\,r[\k(y,\eta)^*]\,u_y v_\eta \\
&=r[\k(x,\xi)^*]u_x r\{\ha_\xi[\k(y,\eta)^*]\}v_\xi u_y v_\eta\\
&=r[\k(x,\xi)^*] r\{(\a_x\circ\ha_\xi)[\k(y,\eta)^*]\}u_x r[\k(y,\xi)^*]u_y v_\xi v_\eta\\
&=r[\k(x,\xi)^*] r\{(\a_x\circ\ha_\xi)[\k(y,\eta)^*]\}r\{\a_x[\k(y,\xi)^*]\}u_x u_y v_\xi v_\eta\\
&=r\big\{\k(x,\xi)^*(\a_x\circ\ha_\xi)[\k(y,\eta)^*]\,\a_x[\k(y,\xi)^*]\big\}\,r[\aa(x,y)]u_{xy} r[\haa(\xi,\eta)] v_{\xi\eta}\\
&=r\big\{\k(x,\xi)^*(\a_x\circ\ha_\xi)[\k(y,\eta)^*]\,\a_x[\k(y,\xi)^*]\aa(x,y)\}\,r\{\a_{xy}[\haa(\xi,\eta)]\big\} u_{xy} v_{\xi\eta}\\
&=r\big\{\k(x,\xi)^*(\a_x\circ\ha_\xi)[\k(y,\eta)^*]\,\a_x[\k(y,\xi)^*]\aa(x,y)\,\a_{xy}[\haa(\xi,\eta)]\k(xy,\xi\eta)\big\}v_{\xi\eta}u_{xy}\,.
\end{aligned}
$$
The conclusion, valid for every $x,y,\xi,\eta$ is 
\begin{equation}\label{vrajitroaca}
\ha_\xi\!\[\k(x,\eta)\]\haa(\xi,\eta)\,\ha_{\xi\eta}\!\[\aa(x,y)\]=\k(x,\xi)^*(\a_x\circ\ha_\xi)[\k(y,\eta)^*]\,\a_x[\k(y,\xi)^*]\aa(x,y)\,\a_{xy}[\haa(\xi,\eta)]\k(xy,\xi\eta)\,.
\end{equation}
The cohomological interpretation of (\ref{vrajitroaca}) will be seen in Remark \ref{strufat}. This relation is sometimes hard to use, so we will reduce to it to a pair of simpler ones (also having a cohomological meaning). By taking $y={\sf e}$ one gets
\begin{equation}\label{poarca}
\a_x[\haa(\xi,\eta)]=\k(x,\xi)\ha_\xi\!\[\k(x,\eta)\]\haa(\xi,\eta)\k(x,\xi\eta)^*
\end{equation}
and by taking $\eta=\varepsilon$ one gets
\begin{equation}\label{spoarca}
\ha_\xi[\aa(x,y)]=\k(x,\xi)^*\a_x\!\[\k(y,\xi)^*\]\aa(x,y)\k(xy,\xi)\,.
\end{equation}

\begin{Lemma}\label{strikat}
Assume that $(\a,\aa)$ is a twisted action of $\,\G$ and $(\ha,\haa)$ is a twisted action of $\,\hG$\,, satisfying (\ref{dihanie}) for every $x,\xi$\,.
Then (\ref{vrajitroaca}) holds for every $x,y,\xi,\eta$ if and only if (\ref{poarca}) and (\ref{spoarca}) hold for every $x,y,\xi,\eta$\,.
\end{Lemma}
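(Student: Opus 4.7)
The first implication is already laid out in the text preceding the lemma: substituting $y={\sf e}$ into (\ref{vrajitroaca}) and using the normalizations $\aa(x,{\sf e})=1$, $\haa(\cdot,\varepsilon)=1$ and $\k({\sf e},\cdot)=1=\k(\cdot,\varepsilon)$ yields (\ref{poarca}), while substituting $\eta=\varepsilon$ yields (\ref{spoarca}). Only the converse requires work.

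For the converse, the plan is to transform the right-hand side of (\ref{vrajitroaca}) into its left-hand side through a sequence of targeted substitutions. First, I would expand the factor $\a_{xy}[\haa(\xi,\eta)]$ appearing on the RHS by applying (\ref{poarca}) with $x$ replaced by $xy$; the trailing $\k(xy,\xi\eta)^*$ it produces then cancels the pre-existing $\k(xy,\xi\eta)$. Next, I would use (\ref{dihanie}) in the form $(\a_x\circ\ha_\xi)[\k(y,\eta)^*]=\k(x,\xi)\ha_\xi\!\[\a_x(\k(y,\eta)^*)\]\k(x,\xi)^*$ to move $\a_x$ inside $\ha_\xi$, and cancel the leading $\k(x,\xi)^*$ against the newly produced $\k(x,\xi)$.

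At this stage the central block $\k(x,\xi)^*\a_x[\k(y,\xi)^*]\aa(x,y)\k(xy,\xi)$ appears, which is precisely $\ha_\xi[\aa(x,y)]$ by (\ref{spoarca}). Gathering the three consecutive $\ha_\xi$-factors rewrites the RHS as $\ha_\xi\!\[\a_x(\k(y,\eta)^*)\,\aa(x,y)\,\k(xy,\eta)\]\haa(\xi,\eta)$. A second use of (\ref{spoarca}), this time with $\xi$ replaced by $\eta$, identifies the bracket as $\k(x,\eta)\,\ha_\eta[\aa(x,y)]$. Distributing $\ha_\xi$ and invoking the twisted action axiom $\ha_\xi\circ\ha_\eta=\ad_{\haa(\xi,\eta)}\circ\ha_{\xi\eta}$ then gives $\ha_\xi[\k(x,\eta)]\haa(\xi,\eta)\ha_{\xi\eta}[\aa(x,y)]\haa(\xi,\eta)^*\haa(\xi,\eta)$, and the last two factors cancel to produce the LHS of (\ref{vrajitroaca}).

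The only real obstacle is bookkeeping: one must track the positions of the eight unitary multipliers in the RHS and recognize the two ``bricks'' supplied by (\ref{spoarca}) at the two correct moments (once with parameter $\xi$, once with parameter $\eta$). Neither the cocycle identity for $\aa$ nor that for $\haa$ is needed; the three relations (\ref{dihanie}), (\ref{poarca}) and (\ref{spoarca}), combined with the twisted action axiom for $\ha$, are exactly what the calculation consumes.
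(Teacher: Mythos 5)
Your proposal is correct and follows essentially the same route as the paper: the forward direction by the specializations $y={\sf e}$ and $\eta=\varepsilon$, and the converse by rewriting the right-hand side of (\ref{vrajitroaca}) via (\ref{poarca}) applied at $xy$, then (\ref{dihanie}), then (\ref{spoarca}) twice (first with $\xi$, then with $\eta$), finishing with the relation $\ha_\xi\circ\ha_\eta=\ad_{\haa(\xi,\eta)}\circ\ha_{\xi\eta}$. Your closing remark that only this composition axiom (and not the $2$-cocycle identities for $\aa$ or $\haa$) is consumed is also consistent with the paper's computation.
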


\begin{proof}
We only need to deduce (\ref{vrajitroaca}) from (\ref{poarca}) and (\ref{spoarca}). One transforms the r.h.s.
$$
\begin{aligned}
&\ \ \ \ \ \ \ \ \ \k(x,\xi)^*(\a_x\circ\ha_\xi)[\k(y,\eta)^*]\,\a_x[\k(y,\xi)^*]\,\aa(x,y)\,\a_{xy}[\haa(\xi,\eta)]\,\k(xy,\xi\eta)\\
&\overset{(\ref{poarca})}{=}\k(x,\xi)^*(\a_x\circ\ha_\xi)[\k(y,\eta)^*]\,\a_x[\k(y,\xi)^*]\,\aa(x,y)\,\k(xy,\xi)\,\ha_\xi[\k(xy,\eta)]\,\haa(\xi,\eta)\\
&\overset{(\ref{dihanie})}{=}(\ha_{\xi}\circ\a_x)[\k(y,\eta)^*]\,\k(x,\xi)^*\,\a_x[\k(y,\xi)^*]\,\aa(x,y)\,\k(xy,\xi)\,\ha_\xi[\k(xy,\eta)]\,\haa(\xi,\eta)\\
&\overset{(\ref{spoarca})}{=}(\ha_{\xi}\circ\a_x)[\k(y,\eta)^*]\,\ha_\xi[\aa(x,y)]\,\ha_\xi[\k(xy,\eta)]\,\haa(\xi,\eta)\\
&\ \ =\ \,\ha_{\xi}\!\left\{\a_x[\k(y,\eta)^*]\aa(x,y)\k(xy,\eta)\right\}\haa(\xi,\eta)\\
&\overset{(\ref{spoarca})}{=}\ha_{\xi}\!\left\{\k(x,\eta)\,\ha_\eta[\aa(x,y)]\right\}\haa(\xi,\eta)\\
&\ \ =\ \ha_\xi[\k(x,\eta)]\,(\ha_\xi\circ\ha_\eta)[\aa(x,y)]\,\haa(\xi,\eta)\\
&\ \ =\ \ha_\xi\!\[\k(x,\eta)\]\haa(\xi,\eta)\,\ha_{\xi\eta}\!\[\aa(x,y)\]
\end{aligned}
$$
and we are done.
\qed
\end{proof}

Now we have at least one motivation for our main notion; see also Remarks \ref{mamaliga} and \ref{boilup} and the constructions of the next sections.

\medskip
\begin{Definition}\label{stalceala}
{\rm A covariant structure} is a semi-covariant structure $\{(\A,\k),(\a,\aa),(\ha,\haa)\}$ for which relations (\ref{dihanie}), (\ref{poarca}) and (\ref{spoarca}) are satisfied for all elements $x,y\in\G\,,\,\xi,\eta\in\hG$\,.
\end{Definition}

\medskip
\begin{Example}\label{ghivreci}
{\rm Suppose that for every $x,\xi$ the multiplier $\k(x,\xi)$ is central and a fixed point for both $\a$ and $\ha$ (this happens if $\k(x,\xi)\in\T$ for instance). Also assume that it is "bilinear" (multiplicative in the second variable and anti-multiplicative in the first). Then (\ref{dihanie}), (\ref{poarca}) and (\ref{spoarca}) simplify a lot: the two actions commute and the cocycles of each twisted action are fixed points of the other action. {\it A sub-particular case is one of the motivations of all our constructions:} $\G$ is an Abelian locally compact group, $\hG:=\widehat\G$ is its Pontryagin dual and $\k(x,\xi):=\xi(x)$ is obtained by applying the character $\xi$ to the element $x$\,.
}
\end{Example}

\medskip
\begin{Example}\label{farharad}
{\rm Obviously a twisted action of $\G$ (or of $\hG$) can be completed by trivial objects to get a covariant structure. One might call $\left\{(\A,1),({\rm id},1),\(\ha,\haa\)\right\}$ {\it a $\G$-trivial covariant structure} and $\left\{(\A,1),(\a,\aa),\({\rm id},1\)\right\}$ might be called {\it a $\hG$-trivial covariant structure}. Similar examples with some non-trivial $\k$ are also available.}
\end{Example}

\medskip
\begin{Example}\label{besteleala}
{\rm We outline now an example that will play an important role below. Let $(\ha,\haa)$ be a measurable twisted action of $\hG$ on the $C^*$-algebra $\A$ and let $\rho$ be a $1$-cochain on $\G$ with values in $\mathcal U\mathcal M(\A)$\,, i.e. a map $\rho:\G\rightarrow\mathcal U\mathcal M(\A)$ satisfying $\rho_{\sf e}=1$\,. The family $\{(\A,\k),(\rho),(\tilde\a,\haa)\}$ will be called {\it a $\G$-particular covariant structure} if for $x\in\G$ and $\xi\in\hG$ one has {\it the covariance condition}
\begin{equation}\label{gargaroth}
\tilde\a_\xi(\rho_x)=\k(x,\xi)^*\rho_x\,.
\end{equation}
If $\G$ is commutative, $\hG$ is its dual, $\k(x,\xi):=\xi(x)$\,, $\,\haa=1$ (so $\ha$ is a true action) and $\rho$ is a group morphism, $(\A,\rho,\tilde\a)$ is traditionally called $\G$-{\it product}; then the condition (\ref{gargaroth}) plays an important role in Landstad duality theory \cite{La,Pe}.
}
\end{Example}

\medskip
\begin{Lemma}\label{andaram}
A $\G$-particular covariant structure can be turned into a covariant structure.
\end{Lemma}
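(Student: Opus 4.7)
The natural candidate is to make the $\G$-action inner via the cochain $\rho$. Specifically, I set
\[
\a_x:=\ad_{\rho_x}\quad\text{and}\quad \aa(x,y):=\rho_x\rho_y\rho_{xy}^{*},
\]
and claim that $(\a,\aa)$ is a measurable twisted action of $\G$ on $\A$ and that together with the already given data the quintuple $\{(\A,\k),(\a,\aa),(\tilde\a,\tilde\aa)\}$ is a covariant structure in the sense of Definition~\ref{stalceala}. The plan has three steps.

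First I would verify that $(\a,\aa)$ is a twisted action in the sense of Definition~\ref{carcharoth}. Normalization $\a_{\sf e}=\id$ and $\aa(x,{\sf e})=1=\aa({\sf e},x)$ follow from $\rho_{\sf e}=1$; the relation $\a_x\circ\a_y=\ad_{\aa(x,y)}\circ\a_{xy}$ is immediate since both sides are $\ad_{\rho_x\rho_y}$; and the $2$-cocycle identity on $\aa$ is the standard coboundary calculation
\[
\aa(x,y)\aa(xy,z)=\rho_x\rho_y\rho_z\rho_{xyz}^{*}=\a_x[\aa(y,z)]\aa(x,yz).
\]
Strict/strong measurability of $\aa$ and $\a$ come from the assumed strict measurability of $\rho$.

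Next I would check relation (\ref{dihanie}). Applying $\a_x=\ad_{\rho_x}$ and using (\ref{gargaroth}) in the form $\tilde\a_\xi(\rho_x)=\k(x,\xi)^{*}\rho_x$ (so also $\tilde\a_\xi(\rho_x)^{*}=\rho_x^{*}\k(x,\xi)$), one directly computes
\[
\ad_{\k(x,\xi)}\circ\tilde\a_\xi\circ\a_x(A)
=\k(x,\xi)\tilde\a_\xi(\rho_x)\tilde\a_\xi(A)\tilde\a_\xi(\rho_x)^{*}\k(x,\xi)^{*}
=\rho_x\tilde\a_\xi(A)\rho_x^{*}=\a_x\circ\tilde\a_\xi(A).
\]
Relation (\ref{spoarca}) is equally mechanical: the left-hand side expands to $\tilde\a_\xi(\rho_x)\tilde\a_\xi(\rho_y)\tilde\a_\xi(\rho_{xy})^{*}=\k(x,\xi)^{*}\rho_x\k(y,\xi)^{*}\rho_y\rho_{xy}^{*}\k(xy,\xi)$, while the right-hand side, after substituting $\a_x=\ad_{\rho_x}$ and the definition of $\aa$, reduces to the same expression by an immediate cancellation.

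The one subtle point, and therefore the main obstacle, is relation (\ref{poarca}), which does not involve $\aa$ and so cannot be checked by merely unfolding definitions; it requires honest use of the $\haa$-twisted action property on $\tilde\a$. I would handle it by computing $\tilde\a_\xi\circ\tilde\a_\eta$ applied to $\rho_x$ in two ways. On the one hand, iterating (\ref{gargaroth}) gives
\[
\tilde\a_\xi(\tilde\a_\eta(\rho_x))=\tilde\a_\xi[\k(x,\eta)]^{*}\tilde\a_\xi(\rho_x)=\tilde\a_\xi[\k(x,\eta)]^{*}\k(x,\xi)^{*}\rho_x.
\]
On the other hand, the twisted-action law $\tilde\a_\xi\circ\tilde\a_\eta=\ad_{\haa(\xi,\eta)}\circ\tilde\a_{\xi\eta}$ combined with (\ref{gargaroth}) at $\xi\eta$ yields
\[
\tilde\a_\xi(\tilde\a_\eta(\rho_x))=\haa(\xi,\eta)\,\k(x,\xi\eta)^{*}\rho_x\,\haa(\xi,\eta)^{*}.
\]
Equating the two expressions, moving the unitary factors past $\rho_x$ by conjugation, and multiplying suitably by $\rho_x^{*}$ on the right, one obtains
\[
\rho_x\haa(\xi,\eta)\rho_x^{*}=\k(x,\xi)\,\tilde\a_\xi[\k(x,\eta)]\,\haa(\xi,\eta)\,\k(x,\xi\eta)^{*},
\]
which is exactly $\a_x[\haa(\xi,\eta)]$ on the left-hand side, i.e.\ (\ref{poarca}). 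This concludes the verification of all three compatibility relations and hence proves the lemma. \qed
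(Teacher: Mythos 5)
Your proposal is correct and follows essentially the same route as the paper: define $\a_x:=\ad_{\rho_x}$, $\aa(x,y):=\rho_x\rho_y\rho_{xy}^{*}$, check (\ref{dihanie}) and (\ref{spoarca}) by direct substitution of (\ref{gargaroth}), and reduce (\ref{poarca}) to the interplay between the covariance condition and the twisted-action law $\ha_\xi\circ\ha_\eta=\ad_{\haa(\xi,\eta)}\circ\ha_{\xi\eta}$. Your handling of (\ref{poarca}) — evaluating $\ha_\xi(\ha_\eta(\rho_x))$ in two ways and rearranging — is just a cosmetic reorganization of the paper's computation, which instead rewrites the right-hand side of (\ref{poarcaprim}) using $\k(x,\xi)^*=\ha_\xi(\rho_x)\rho_x^*$; both use exactly the same ingredients.
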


\begin{proof}
If $\left\{\A,(\rho),(\tilde\a,\haa)\right\}$ is a particular covariant structure, let us set 
\begin{equation*}\label{jmechera}
\a_x:={\sf ad}_{\rho_x}\quad {\rm and}\quad \aa(x,y):=\rho_x\rho_y\rho_{xy}^*\,.
\end{equation*} 
Clearly $(\a,\aa)$ is a twisted action of $\G$ on $\A$\,. It is easy to check that it is measurable if $\rho$ is strictly measurable and continuous if $\rho$ is strictly continuous. 

To check (\ref{dihanie}), for $x\in\G\,,\xi\in\hG$ one computes
$$
\ha_\xi\circ\ad_{\rho_x}=\ad_{\ha_\xi(\rho_x)}\!\circ\ha_\xi=\ad_{\k(x,\xi)^*\rho_x}\!\circ\ha_\xi=\ad_{\k(x,\xi)^*}\circ\ad_{\rho_x}\!\circ\ha_\xi\,.
$$
We now verify (\ref{spoarca}):
$$
\begin{aligned}
\k(x,\xi)^*\a_{x}\!\[\k(y,\xi)^*\]\aa(x,y)\k(xy,\xi)&=\k(x,\xi)^*\rho_{x}\k(y,\xi)^*\rho_{x}^*\,\rho_{x}\rho_{y}\rho_{xy}^*\,\k(xy,\xi)\\
&=\k(x,\xi)^*\rho_{x}\,\k(y,\xi)^*\rho_{y}\,\rho_{xy}^*\,\k(xy,\xi)\\
&=\tilde\a_\xi(\rho_x)\,\tilde\a_\xi(\rho_y)\,\tilde\a_\xi(\rho_{xy})^*=\ha_\xi[\aa(x,y)]\,.
\end{aligned}
$$
The relation (\ref{poarca}) reads now
\begin{equation}\label{poarcaprim}
\rho_x\haa(\xi,\eta)\rho_x^*=\k(x,\xi)\ha_\xi\!\[\k(x,\eta)\]\haa(\xi,\eta)\k(x,\xi\eta)^*\,.
\end{equation}
Rewriting (\ref{gargaroth}) in the form $\k(x,\xi)^*=\tilde\a_\xi(\rho_x)\rho_x^*$\,, the r.h.s of (\ref{poarcaprim}) can be transformed
$$
\begin{aligned}
\k(x,\xi)\ha_\xi\!\[\k(x,\eta)\]\haa(\xi,\eta)\k(x,\xi\eta)^*&=\rho_x\tilde\a_\xi(\rho_x^*)\,\ha_\xi\!\[\rho_x\tilde\a_\eta(\rho_x^*)\]\haa(\xi,\eta)\,\tilde\a_{\xi\eta}(\rho_x)\rho_x^*\\
&=\rho_x\,\ha_\xi\!\[\tilde\a_\eta(\rho_x^*)\]\haa(\xi,\eta)\,\tilde\a_{\xi\eta}(\rho_x)\rho_x^*\\
&=\rho_x\,\haa(\xi,\eta)\,\tilde\a_{\xi\eta}(\rho_x^*)\,\tilde\a_{\xi\eta}(\rho_x)\rho_x^*\\
&=\rho_x\haa(\xi,\eta)\rho_x^*\,.
\end{aligned}
$$
\end{proof}

\begin{Example}\label{rhovanion}
{\rm By analogy, one defines {\it $\hG$-particular (measurable) covariant structures} $\left\{(\A,\k),(\a,\aa),\(\ha,\haa\)\right\}$ where, by definition, the twisted action $(\a,\aa)$ is arbitrary, but one has $\ha_\xi:={\sf ad}_{\tilde\rho_\xi}$ and $\haa(\xi,\eta):=\tilde\rho_\xi\tilde\rho_\eta\tilde\rho_{\xi\eta}^{\,*}$ for some measurable $1$-cochain $\tilde\rho:\hG\to\mathcal U\mathcal M(\A)$ satisfying $\a_x(\tilde\rho_\xi)=\k(x,\xi)\tilde\rho_\xi$ for all $x,\xi$\,.}
\end{Example}

\medskip
\begin{Example}\label{papaleasca}
{\rm We close this section giving an example of covariant representation of a given covariant structure $\{(\A,\k),(\a,\aa),(\ha,\haa)\}$\,. Let $\varpi:\A\to\mathbb B(\H)$ be a faithful representation in a separable Hilbert space $\H$\,. We can inflate $\varpi$ to a representation of $\A$ in the Hilbert space $\mathscr H:=L^2(\G\times\hG;\H)\cong L^2(\G\times\hG)\otimes\H$ by
\begin{equation}\label{fleasca}
[\pi(A)\Omega](x,\xi):=\varpi\big[(\ha_\xi\circ\a_x)(A)\big]\Omega(x,\xi)\,.
\end{equation}
One also defines
\begin{equation}\label{pleasca}
(U_z\Omega)(x,\xi):=\Delta_\G(z)^{1/2}\varpi\big\{\ha_\xi[\aa(x,z)]\big\}\Omega(xz,\xi)\,,
\end{equation}
\begin{equation}\label{pleasca}
(V_\zeta\Omega)(x,\xi):=\Delta_{\hG}(\zeta)^{1/2}\varpi\big\{\ha_\xi[\k(x,\zeta)]\haa(\xi,\zeta)\big\}\Omega(x,\xi\zeta)\,.
\end{equation}
It is quite straightforward to show that $(\mathscr H,\pi,U,V)$ is indeed a covariant representation; we say that {\it it is induced by $\varpi$}\,. Let us only indicate the most difficult of the relevant computations:
$$
\begin{aligned}
(U_z V_\zeta\Omega)(x,\xi)&=\Delta_\G(z)^{1/2}\varpi\big\{\ha_\xi[\aa(x,z)]\big\}(V_\zeta\Omega)(xz,\zeta)\\
&=\Delta_\G(z)^{1/2}\varpi\big\{\ha_\xi[\aa(x,z)]\big\}\Delta_{\hG}(z)^{1/2}\varpi\big\{\ha_\xi[\k(xz,\zeta)]\haa(\xi,\zeta)\big\}\Omega(xz,\xi\zeta)\\
&=\Delta_\G(z)^{1/2}\Delta_{\hG}(\zeta)^{1/2}\varpi\big\{\ha_\xi[\aa(x,z)\k(xz,\zeta)]\big\}\varpi[\haa(\xi,\zeta)]\Omega(xz,\xi\zeta)\\
&\!\overset{(\ref{spoarca})}{=}\!\Delta_\G(z)^{1/2}\Delta_{\hG}(\zeta)^{1/2}\varpi\big\{\ha_\xi\big[\a_x\big(\k(z,\zeta)\big)\k(x,\zeta)\,\ha_\zeta\big(\aa(x,z)\big)\big]\big\}\varpi[\haa(\xi,\zeta)]\Omega(xz,\xi\zeta)\\
&=\Delta_\G(z)^{1/2}\Delta_{\hG}(\zeta)^{1/2}\varpi\big\{(\ha_\xi\circ\a_x)[\k(z,\zeta)]\big\}\varpi\big\{\ha_\xi[\k(x,\zeta)](\ha_\xi\circ\ha_\zeta)[\aa(x,z)]\haa(\xi,\zeta)\big\}\Omega(xz,\xi\zeta)\\
&=\varpi\big\{(\ha_\xi\circ\a_x)[\k(z,\zeta)]\big\}\Delta_\G(z)^{1/2}\Delta_{\hG}(\zeta)^{1/2}\varpi\big\{\ha_\xi[\k(x,\zeta)]\haa(\xi,\zeta)\ha_{\xi\zeta}[\aa(x,z)]\big\}\Omega(xz,\xi\zeta)\\
&=\varpi\big\{(\ha_\xi\circ\a_x)[\k(z,\zeta)]\big\}\Delta_{\hG}(\zeta)^{1/2}\varpi\big\{\ha_\xi[\k(x,\zeta)]\haa(\xi,\zeta)\big\}\Delta_\G(z)^{1/2}\varpi\{\ha_{\xi\zeta}[\aa(x,z)]\}\Omega(xz,\xi\zeta)\\
&=\pi[\k(z,\zeta)](V_\zeta U_z\Omega)(x,\xi)\,.
\end{aligned}
$$
}
\end{Example}

\section{The twisted action attached to a covariant structure}\label{vizigot}

\medskip
Let us set for $x,y\in\G$ and $\xi,\eta\in\hG$
\begin{equation}\label{zmeul}
\la_{\!(x,\xi)}:=\ha_\xi\circ\a_x\,,
\end{equation}
\begin{equation}\label{ballaur}
\laa\big((x,\xi),(y,\eta)\big):=
\ha_\xi\!\[\k(x,\eta)\]\haa(\xi,\eta)\,\ha_{\xi\eta}[\aa(x,y)]\,.
\end{equation}

\begin{Proposition}\label{sconx}
$(\la,\laa)$ is a measurable twisted action of $\,\G\times\hG$ on $\A$\,. If the two twisted actions $(\a,\aa)$ and $(\ha,\haa)$ are continuous, then $(\la,\laa)$ is continuous.
\end{Proposition}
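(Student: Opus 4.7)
I would verify in turn the three defining identities of Definition \ref{carcharoth} for the candidate pair $(\la,\laa)$ on $\G\times\hG$, and then record the regularity. The normalizations are immediate: $\la_{({\sf e},\varepsilon)}=\ha_\varepsilon\circ\a_{\sf e}={\rm id}_\A$, and substituting $y={\sf e}$ or $\eta=\varepsilon$ in (\ref{ballaur}), together with the normalizations of $\a,\ha,\aa,\haa,\k$, yields $\laa\big((x,\xi),({\sf e},\varepsilon)\big)=1=\laa\big(({\sf e},\varepsilon),(y,\eta)\big)$.

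For the twisted homomorphism identity I would start from
$$
\la_{(x,\xi)}\circ\la_{(y,\eta)}=\ha_\xi\circ\a_x\circ\ha_\eta\circ\a_y,
$$
apply (\ref{dihanie}) to push $\a_x$ past $\ha_\eta$ at the cost of a factor $\ad_{\k(x,\eta)}$, conjugate this inner automorphism outside through $\ha_\xi$ to produce $\ad_{\ha_\xi[\k(x,\eta)]}$, and then apply the twisted-action identities $\ha_\xi\circ\ha_\eta=\ad_{\haa(\xi,\eta)}\circ\ha_{\xi\eta}$ and $\a_x\circ\a_y=\ad_{\aa(x,y)}\circ\a_{xy}$. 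Conjugating the $\aa$-factor through $\ha_{\xi\eta}$ and collecting the three inner automorphisms gives exactly $\ad_{\laa((x,\xi),(y,\eta))}\circ\la_{(xy,\xi\eta)}$.

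The cocycle identity is the only real obstacle. I would expand both sides of
$$
\laa\big((x,\xi),(y,\eta)\big)\,\laa\big((xy,\xi\eta),(z,\zeta)\big)=\la_{(x,\xi)}\!\big[\laa\big((y,\eta),(z,\zeta)\big)\big]\,\laa\big((x,\xi),(yz,\eta\zeta)\big)
$$
using (\ref{ballaur}), distributing $\ha_\xi\circ\a_x$ on the right over the three factors of $\laa\big((y,\eta),(z,\zeta)\big)$ and invoking (\ref{dihanie}) to move each occurrence of $\a_x$ across an inner $\ha$. The cocycle identity for $\haa$ absorbs two $\haa$-factors into $\haa(\xi,\eta\zeta)$, and the cocycle identity for $\aa$, transported by $\ha_{\xi\eta\zeta}$, produces $\ha_{\xi\eta\zeta}[\aa(xy,z)]$; the remaining discrepancy is entirely in the $\k$- and $\aa$-factors and is reconciled by (\ref{poarca}) and (\ref{spoarca}), applied in the same spirit as in the proof of Lemma \ref{strikat}. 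The delicate point is choosing the order in which these compatibility relations are applied so that the $\k$- and $\aa$-factors cancel cleanly against each other.

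Finally, strong measurability of the map $(x,\xi)\mapsto\la_{(x,\xi)}=\ha_\xi\circ\a_x$ follows from the separate strong measurability of $\a$ and $\ha$ on the separable $C^*$-algebra $\A$, while strict measurability of $\laa$ is inherited from that of its factors, the arguments of $\ha_\xi$ and $\ha_{\xi\eta}$ being themselves strictly measurable in $(x,\xi,y,\eta)$. Under continuity hypotheses the same reasoning delivers strong continuity of $\la$ and strict continuity of $\laa$.
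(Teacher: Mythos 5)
Your plan follows exactly the paper's route: the normalizations, the verification of $\la_{(x,\xi)}\circ\la_{(y,\eta)}=\ad_{\laa((x,\xi),(y,\eta))}\circ\la_{(xy,\xi\eta)}$ by pushing $\a_x$ past $\ha_\eta$ via (\ref{dihanie}) and collecting the resulting inner automorphisms, and the reduction of the $2$-cocycle identity to the compatibility relations (\ref{dihanie}), (\ref{poarca}), (\ref{spoarca}) together with the cocycle identities of $\aa$ and $\haa$ are all precisely what the paper does, and your measurability/continuity remarks match the paper's (brief) treatment.

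The one shortfall is that the $2$-cocycle identity — the only genuinely nontrivial step, and the bulk of the paper's proof — is described but not actually verified: you acknowledge that "the delicate point is choosing the order in which these compatibility relations are applied" and stop there, whereas that ordering is exactly where the work lies. For the record, the paper resolves it as follows: on the left-hand side one commutes $\haa(\xi,\eta)$ past $\ha_{\xi\eta}[\cdots]$ so as to group $\k(x,\eta)\,\ha_\eta[\aa(x,y)\k(xy,\zeta)]$ inside a single $\ha_\xi\{\cdots\}$, applies (\ref{spoarca}) in the form $\aa(x,y)\k(xy,\zeta)=\a_x[\k(y,\zeta)]\k(x,\zeta)\ha_\zeta[\aa(x,y)]$, and then uses the cocycle identities of $\haa$ and $\aa$; on the right-hand side one distributes $\ha_\xi\circ\a_x$, uses (\ref{dihanie}) to convert $\a_x\circ\ha_\eta$ and $\a_x\circ\ha_{\eta\zeta}$ at the cost of $\k$-factors, and then applies (\ref{poarca}) to replace $\a_x[\haa(\eta,\zeta)]\k(x,\eta\zeta)$ by $\ha_\eta[\k(x,\zeta)]\haa(\eta,\zeta)$ (up to the surviving $\k(x,\eta)$-factors), after which the two sides visibly coincide. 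So your outline is viable and is the paper's argument in spirit, but as written it asserts rather than establishes the cancellation; carrying out this computation is what turns the plan into a proof.
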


\begin{proof}
Using the assumptions and relations as $\Psi\circ\ad_B=\ad_{\Psi(B)}\circ\Psi$ and $\ad_A\circ\ad_B=\ad_{AB}$ one computes
$$
\begin{aligned}
\la_{(x,\xi)}\circ\la_{(y,\eta)}&=\ha_\xi\circ\a_x\circ\ha_\eta\circ\a_y\\
&=\ha_\xi\circ\ad_{\k(x,\eta)}\circ\ha_\eta\circ\a_x\circ\a_y\\
&=\ad_{\ha_\xi[\k(x,\eta)]}\circ\ha_\xi\circ\ha_\eta\circ\a_x\circ\a_y\\
&=\ad_{\ha_\xi[\k(x,\eta)]}\circ\ad_{\haa(\xi,\eta)}\circ\ha_{\xi\eta}\circ\ad_{\aa(x,y)}\circ\a_{xy}\\
&=\ad_{\ha_\xi[\k(x,\eta)]}\circ\ad_{\haa(\xi,\eta)}\circ\ad_{\ha_{\xi\eta}[\aa(x,y)]}\circ\ha_{\xi\eta}\circ\a_{xy}\\
&=\ad_{\laa\big((x,\xi),(y,\eta)\big)}\circ\la_{(xy,\xi\eta)}\,.
\end{aligned}
$$
One computes with a huge pacience
$$
\begin{aligned}
&\laa\big((x,\xi),(y,\eta)\big)\laa\big((xy,\xi\eta),(z,\zeta)\big)\\
=\ &\ha_\xi\!\[\k(x,\eta)\]\haa(\xi,\eta)\,\ha_{\xi\eta}[\aa(x,y)]\,\ha_{\xi\eta}[\k(xy,\zeta)]\,\haa(\xi\eta,\zeta)\,\ha_{\xi\eta\zeta}[\aa(xy,z)]\\
=\ &\ha_\xi\big\{\k(x,\eta)\ha_{\eta}[\aa(x,y)\k(xy,\zeta)]\big\}\haa(\xi,\eta)\,\haa(\xi\eta,\zeta)\,\ha_{\xi\eta\zeta}[\aa(xy,z)]\\
\overset{(\ref{spoarca})}{=}&\ha_\xi\big\{\k(x,\eta)\ha_{\eta}[\a_x(\k(y,\zeta))\k(x,\zeta)\ha_\zeta(\aa(x,y))]\big\}\,\haa(\xi,\eta)\,\haa(\xi\eta,\zeta)\,\ha_{\xi\eta\zeta}[\aa(xy,z)]\\
=\ &\ha_\xi\big\{\k(x,\eta)\ha_{\eta}[\a_x(\k(y,\zeta))\k(x,\zeta)]\big\}\,(\ha_\xi\circ\ha_\eta\circ\ha_\zeta)[\aa(x,y)]\,\haa(\xi,\eta)\,\haa(\xi\eta,\zeta)\,\ha_{\xi\eta\zeta}[\aa(xy,z)]\\
=\ &\ha_\xi\big\{\k(x,\eta)\ha_{\eta}[\a_x(\k(y,\zeta))\k(x,\zeta)]\big\}\,\haa(\xi,\eta)\,\haa(\xi\eta,\zeta)\,\ha_{\xi\eta\zeta}[\aa(x,y)]\,\ha_{\xi\eta\zeta}[\aa(xy,z)]\\
=\ &\ha_\xi\big\{\k(x,\eta)\ha_{\eta}[\a_x(\k(y,\zeta))\k(x,\zeta)]\big\}\,\ha_\xi[\haa(\eta,\zeta)]\,\haa(\xi,\eta\zeta)\,\ha_{\xi\eta\zeta}\big\{\a_x[\aa(y,z)]\aa(x,yz)]\big\}\,.\\
\end{aligned}
$$
On the other hand
$$
\begin{aligned}
&\la_{(x,\xi)}\!\[\laa\big((y,\eta),(z,\zeta)\big)\]\laa\big((x,\xi),(yz,\eta\zeta)\big)\\
=\ \ &(\ha_\xi\circ\a_x)\!\big\{\ha_\eta[\k(y,\zeta)]\haa(\eta,\zeta)\ha_{\eta\zeta}[\aa(y,z)]\big\}\,\ha_\xi[\k(x,\eta\zeta)]\haa(\xi,\eta\zeta)\,\ha_{\xi\eta\zeta}[\aa(x,yz)]\\
=\ \ &\ha_\xi\big[\a_x\{\ha_\eta[\k(y,\zeta)]\haa(\eta,\zeta)\ha_{\eta\zeta}[\aa(y,z)]\}\k(x,\eta\zeta)\big]\haa(\xi,\eta\zeta)\,\ha_{\xi\eta\zeta}[\aa(x,yz)]\\
=\ \ &\ha_\xi\big\{(\a_x\circ\ha_\eta)[\k(y,\zeta)]\,\a_x[\haa(\eta,\zeta)]\,(\a_x\circ\ha_{\eta\zeta})[\aa(y,z)]\k(x,\eta\zeta)\big\}\haa(\xi,\eta\zeta)\,\ha_{\xi\eta\zeta}[\aa(x,yz)]\\
\overset{(\ref{dihanie})}{=}&\ha_\xi\big\{\k(x,\eta)(\ha_\eta\circ\a_x)[\k(y,\zeta)]\k(x,\eta)^*\,\a_x[\haa(\eta,\zeta)]\,\k(x,\eta\zeta)(\ha_{\eta\zeta}\circ\a_{x})[\aa(y,z)]\big\}\haa(\xi,\eta\zeta)\,\ha_{\xi\eta\zeta}[\aa(x,yz)]\\
=\ &\ha_\xi\big\{\k(x,\eta)(\ha_\eta\circ\a_x)[\k(y,\zeta)]\k(x,\eta)^*\,\a_x[\haa(\eta,\zeta)]\,\k(x,\eta\zeta)\big\}(\ha_\xi\circ\ha_{\eta\zeta})\{\a_{x}[\aa(y,z)]\}\haa(\xi,\eta\zeta)\,\ha_{\xi\eta\zeta}[\aa(x,yz)]\\
=\ &\ha_\xi\big\{\k(x,\eta)(\ha_\eta\circ\a_x)[\k(y,\zeta)]\k(x,\eta)^*\,\a_x[\haa(\eta,\zeta)]\,\k(x,\eta\zeta)\big\}\haa(\xi,\eta\zeta)\ha_{\xi\eta\zeta}\{\a_{x}[\aa(y,z)]\}\,\ha_{\xi\eta\zeta}[\aa(x,yz)]\\
\overset{(\ref{poarca})}{=}&\ha_\xi\big\{\k(x,\eta)(\ha_\eta\circ\a_x)[\k(y,\zeta)]\ha_\eta[\k(x,\zeta)]\haa(\eta,\zeta)\big\}\haa(\xi,\eta\zeta)\ha_{\xi\eta\zeta}\{\a_{x}[\aa(y,z)]\}\,\ha_{\xi\eta\zeta}[\aa(x,yz)]\\
=\ &\ha_\xi\big\{\k(x,\eta)(\ha_\eta\circ\a_x)[\k(y,\zeta)]\ha_\eta[\k(x,\zeta)]\big\}\ha_\xi[\haa(\eta,\zeta)]\haa(\xi,\eta\zeta)\ha_{\xi\eta\zeta}\{\a_{x}[\aa(y,z)]\}\,\ha_{\xi\eta\zeta}[\aa(x,yz)]\,,\\
\end{aligned}
$$
the two expressions coincide and thus the $2$-cocycle condition is verified. The normalization of $\laa$ is obvious. 

The continuity and the measurability are easy.
\qed
\end{proof}

\begin{Remark}\label{strufat}
{\rm Relation (\ref{vrajitroaca}) can be rephrased, also using (\ref{dihanie})
\begin{equation}\label{vrajitroasca}
\k(x,\xi)\,(\ha_\xi\circ \a_x)[\k(y,\eta)]\left\{\ha_\xi\!\[\k(x,\eta)\]\haa(\xi,\eta)\,\ha_{\xi\eta}\!\[\aa(x,y)\]\right\}=\a_x[\k(y,\xi)^*]\,\aa(x,y)\,\a_{xy}[\haa(\xi,\eta)]\,.
\end{equation}
The r.h.s. of (\ref{vrajitroasca}) defines a $2$-cocycle $\raa$ on $\G\times\hG$ with respect to $\ra_{\!(x,\xi)}:=a_x\circ\ha_\xi$ and (\ref{vrajitroaca}) can be rewritten
\begin{equation}\label{vrajitroasco}
\k(x,\xi)\,\la_{\!(x,\xi)}[\k(y,\eta)]\laa\big((x,\xi),(y,\eta)\big)\k(xy,\xi\eta)^*=\raa\big((x,\xi),(y,\eta)\big)\,.
\end{equation} 
Relations (\ref{dihanie}) and (\ref{vrajitroasco}) tell that the twisted actions $(\la,\laa)$ and $(\ra,\raa)$ are exterior equivalent (Remark \ref{duniath} and \cite{PR1}) through the $1$-cochain $\k$\,. Rephrasings in terms of the group ${\sf H}':=\hG\times\G$\,, based on Remark \ref{ghiveci}, are left to the reader.
}
\end{Remark}

\medskip
\begin{Remark}\label{mamaliga}
{\rm Now that we have introduced all the notations, it may be useful for the reader to recall the definition of a {\it covariant structure} $\{(\A,\kappa),(\a,\aa),(\ha,\haa)\}$: It is defined by a twisted action $(\a,\aa)$ of the group $\G$\,, a twisted action $(\ha,\haa)$ of the group $\hG$ and a normalized strictly continuous map $\kappa:\G\times\hG\to\U\M(\A)$ such that for all $X,Y\in\G\times\hG$
\begin{equation*}\label{mismas}
\ra_{\!X}=\ad_{\k(X)}\circ\la_{\!X}\quad{\rm and}\quad\k(X)\,\la_{\!X}[\k(Y)]\laa\big(X,Y\big)\k(XY)^*=\raa\big(X,Y\big)\,. 
\end{equation*}
Using a notation of Remark \ref{duniath}, this can be written $(\la,\laa)\overset{\k}{\sim}(\ra,\raa)$\,.
}
\end{Remark}

\medskip
\begin{Proposition}\label{eredpizduin}
There are one-to-one correspondences between:
\begin{enumerate}
\item
Covariant morphisms $(\B,r,u,v)$ of the covariant structure $\left\{(\A,\kappa),(\a,\aa),(\ha,\haa)\right\}$ (cf. Def. \ref{stirba})\,.
\item
Covariant morphisms $(\B,r,w)$ of the twisted $C^*$-dynamical system $(\A,\la,\laa)$ with group ${\sf H}:=\G\times\widetilde\G$\,.
\item
Covariant morphisms $(\B,r,w')$ of the twisted $C^*$-dynamical system $(\A,\ra,\raa)$ with group ${\sf H}:=\G\times\hG$\,.
\end{enumerate}
\end{Proposition}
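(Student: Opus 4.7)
The plan is to build an explicit bijection between (1) and (2) and then use the exterior equivalence $(\la,\laa)\overset{\k}{\sim}(\ra,\raa)$ (Remark \ref{strufat} together with Remark \ref{duniath}) to transport it to (3). Given $(\B,r,u,v)$ as in Definition \ref{stirba}, I will set
\[
w_{(x,\xi)}:=v_\xi u_x\,,\quad\forall\,(x,\xi)\in\G\times\hG\,,
\]
and conversely, given a covariant morphism $(\B,r,w)$ of $(\A,\la,\laa)$, I will set
\[
u_x:=w_{(x,\varepsilon)}\,,\qquad v_\xi:=w_{({\sf e},\xi)}\,.
\]

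First I would verify that $(\B,r,w)$ defined from $(\B,r,u,v)$ is a covariant morphism of $(\A,\la,\laa)$. The covariance relation is immediate: $w_{(x,\xi)}r(A)w_{(x,\xi)}^*=v_\xi r[\a_x(A)]v_\xi^*=r[(\ha_\xi\circ\a_x)(A)]=r[\la_{(x,\xi)}(A)]$. The cocycle identity is the main calculation; using the commutation $u_x v_\eta=r[\k(x,\eta)]v_\eta u_x$, then the cocycle relations for $u$ and $v$, one gets
\[
w_{(x,\xi)}w_{(y,\eta)}=v_\xi u_x v_\eta u_y=r\big\{\ha_\xi[\k(x,\eta)]\,\haa(\xi,\eta)\,\ha_{\xi\eta}[\aa(x,y)]\big\}\,v_{\xi\eta}u_{xy}\,,
\]
and the multiplier in curly brackets is exactly $\laa\big((x,\xi),(y,\eta)\big)$ by (\ref{ballaur}). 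Strict measurability of $w$ follows from that of $u$ and $v$ and strict continuity of multiplication in $\U\M(\B)$.

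Next I would verify the converse. With $u_x:=w_{(x,\varepsilon)}$ and $v_\xi:=w_{({\sf e},\xi)}$, the covariance and cocycle conditions for $u$ and $v$ reduce to the corresponding conditions for $w$ once one checks that $\laa\big((x,\e),(y,\e)\big)=\aa(x,y)$ and $\laa\big((\e,\xi),(\e,\eta)\big)=\haa(\xi,\eta)$, both of which follow from the normalizations $\k(\,\cdot\,,\e)=\k(\e,\,\cdot\,)=1$, $\aa(\,\cdot\,,\e)=\aa(\e,\,\cdot\,)=1$, $\haa(\,\cdot\,,\e)=\haa(\e,\,\cdot\,)=1$. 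For the $\k$-commutation, a short computation using $\laa\big((x,\e),(\e,\xi)\big)=\k(x,\xi)$ and $\laa\big((\e,\xi),(x,\e)\big)=1$ yields
\[
u_x v_\xi=w_{(x,\e)}w_{(\e,\xi)}=r[\k(x,\xi)]w_{(x,\xi)}\quad\text{and}\quad v_\xi u_x=w_{(\e,\xi)}w_{(x,\e)}=w_{(x,\xi)}\,,
\]
so $u_x v_\xi=r[\k(x,\xi)]v_\xi u_x$. The same identity $v_\xi u_x=w_{(x,\xi)}$ shows that the two constructions are mutually inverse, establishing the bijection between (1) and (2).

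For the correspondence (2)$\leftrightarrow$(3), I would invoke Remark \ref{strufat}: since $(\la,\laa)\overset{\k}{\sim}(\ra,\raa)$ via the $1$-cochain $\k$, the standard recipe of Remark \ref{duniath} produces a bijection $w\longleftrightarrow w'$ given by $w'_{(x,\xi)}:=r[\k(x,\xi)]w_{(x,\xi)}$, which, combined with the previous bijection, yields precisely $w'_{(x,\xi)}=u_x v_\xi$. This can also be checked directly: using the commutation between $u_x$ and $v_\xi$, one computes $w'_{(x,\xi)}w'_{(y,\eta)}=r[\ra\,\text{-cocycle}]w'_{(xy,\xi\eta)}$ with the cocycle given by the r.h.s.\ of (\ref{vrajitroasca}), namely $\raa$. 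The main obstacle is the cocycle verification in the first paragraph above: it requires juggling the $\k$-twisted commutation with the two separate cocycle identities, and making sure all the $\ha_\xi$'s, $\a_x$'s land on the correct factors to reproduce the compound cocycle $\laa$ from (\ref{ballaur}); once this is done, every other step is a straightforward unwinding of normalizations.
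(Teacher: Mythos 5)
Your proposal is correct and follows essentially the same route as the paper: the same assignments $w(x,\xi):=v_\xi u_x$ and, conversely, $u_x:=w(x,\varepsilon)$, $v_\xi:=w({\sf e},\xi)$, with the verification resting on the same cocycle computation and the special values $\laa\big((x,\varepsilon),(y,\varepsilon)\big)=\aa(x,y)$, $\laa\big(({\sf e},\xi),({\sf e},\eta)\big)=\haa(\xi,\eta)$, $\laa\big((x,\varepsilon),({\sf e},\eta)\big)=\k(x,\eta)$, $\laa\big(({\sf e},\xi),(y,\varepsilon)\big)=1$. Your treatment of item (3) via the exterior equivalence $(\la,\laa)\overset{\k}{\sim}(\ra,\raa)$ is a minor packaging difference that lands on exactly the paper's formula $w'(x,\xi)=u_xv_\xi$.
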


\begin{proof}
If $(\B,r,u,v)$ is given, one defines 
\begin{equation}\label{zaluga}
w:\G\times\hG\to\U\M(\B)\,,\quad\,w(x,\xi):=v_\xi u_x=r[\k(x,\xi)^*]u_x v_\xi\,.
\end{equation}
We show that $(\B,r,w)$ is a covariant morphism of $(\A,\la,\laa)$\,. If $(x,\xi),(y,\eta)\in\G\times\hG$ one has
$$
\begin{aligned}
w(x,\xi)w(y,\eta)&=v_\xi u_x v_\eta u_y\\
&=v_\xi r[\k(x,\eta)]v_\eta u_x u_y \\
&=r\!\left\{\ha_\xi\!\[\k(x,\eta)\]\right\}v_\xi v_\eta u_x u_y\\
&=r\!\left\{\ha_\xi\!\[\k(x,\eta)\]\right\}r[\haa(\xi,\eta)]v_{\xi\eta}\,r[\aa(x,y)]u_{xy}\\
&=r\!\left\{\ha_\xi\!\[\k(x,\eta)\]\right\}r[\haa(\xi,\eta)]\,r\{\ha_{\xi\eta}[\aa(x,y)]\}\,v_{\xi\eta}u_{xy}\\
&=r\big\{\ha_\xi\!\[\k(x,\eta)\]\haa(\xi,\eta)\,\ha_{\xi\eta}\!\[\aa(x,y)\]\!\big\}\,w(xy,\xi\eta)\\
&=r\!\[\laa\big((x,y),(\xi,\eta)\big)\]w\big((x,\xi)(y,\eta)\big)\,.
\end{aligned}
$$
On the other hand, for $(x,\xi)\in\G\times\hG$ and $A\in\A$ one gets
$$
\begin{aligned}
w(x,\xi)r(A)w(x,\xi)^*&=v_\xi u_x r(A)u_x^*v_\xi^*\\
&=v_\xi r[\a_x(A)]v_\xi^*\\
&=r\!\left\{\ha_\xi\big[\a_x(A)\big]\right\}\\
&=r\!\[\la_{\!(x,\xi)}(A)\].
\end{aligned}
$$

Now assume that $(\B,r,w)$ is a covariant representation of the twisted $C^*$-dynamical system $(\A,\la,\laa)$\,. Defining $\,u:\G\to\U\M(\B)$ and $v:\hG\to\U\M(\B)$ by
\begin{equation}\label{gheonoaie}
\ u_x:=w(x,\varepsilon)\,,\quad\ v_\xi:=w({\sf e},\xi)
\end{equation}
one gets a quadruple $(\B,r,u,v)$ satisfying the conditions specified at $1$\,. We leave the easy verifications to the reader. Among others one uses the relations
$$
\laa\big((x,\varepsilon),(y,\varepsilon)\big)=\aa(x,y)\,,\ \quad\laa\big(({\sf e},\xi),({\sf e},\eta)\big)=\haa(\xi,\eta)\,,
$$
$$
\laa\big((x,\varepsilon),({\sf e},\eta)\big)=\k(x,\eta)\,,\ \quad\laa\big(({\sf e},\xi),(y,\varepsilon)\big)=1\,.
$$
So we made explicit the correspondence between 1 and 2\,. The correspondence between 1 and 3 is analogous; just put 
\begin{equation*}\label{drac}
w'(x,\xi):=u_x v_\xi\quad{\rm for}\ (x,\xi)\in\G\times\hG\,. 
\end{equation*}
\qed
\end{proof}

\section{The bi-product of a covariant structure}\label{vizigott}

\medskip
\begin{Definition}\label{djin}
Let $\left\{(\A,\k),(\a,\aa),(\ha,\haa)\right\}$ be a given covariant structure. {\rm A (twisted crossed) bi-product} is a universal covariant morphism $\(\C,\iota_\A,\iota_\G,\iota_{\hG}\)$\,. Universality means that if $(\B,r,u,v)$ is another covariant morphism, there exists a unique non-degenerate morphism $s:\C\to\M(\B)$ such that 
\begin{equation}\label{ghiaur}
u=s\circ\iota_\G\,,\quad v=s\circ\iota_{\hG}\,,\quad r=s\circ\iota_\A\,.
\end{equation}
\end{Definition}

Rather often we will call {\it bi-product} only the $C^*$-algebra $\C$, especially when the mappings $\(\iota_\A,\iota_\G,\iota_{\hG}\)$ are obvious or not relevant. It could be denoted generically by $\C\equiv\A_{(\a,\ha)}^{(\aa,\haa)}$, but it also depends on $\k$\,; its existence and (essential) uniqueness will be proved now.

\medskip
\begin{Proposition}\label{simurg}
Every covariant structure possesses a (twisted crossed) bi-product, that is unique up to a canonical isomorphism.
\end{Proposition}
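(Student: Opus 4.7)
The plan is to realize the bi-product as the ordinary twisted crossed product $\C:=\A\!\rtimes_{\la}^{\laa}\!(\G\times\hG)$ associated to the twisted action $(\la,\laa)$ of $\G\times\hG$ constructed in Proposition \ref{sconx}, and then translate its universal property through the bijection of Proposition \ref{eredpizduin}. Uniqueness up to canonical isomorphism is a formal consequence of the universal property, so only existence requires work.

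For existence, I would proceed as follows. First, set $\C:=\A\!\rtimes_{\la}^{\laa}\!(\G\times\hG)$. By Remark \ref{strigoy} the twisted crossed product is characterized by a universal property: there is a canonical covariant morphism $(\C,\iota_\A,w)$ of the twisted $C^*$-dynamical system $(\A,\la,\laa)$ such that every covariant morphism $(\B,r,w')$ of $(\A,\la,\laa)$ factors uniquely as $w'=s\circ w$, $r=s\circ\iota_\A$ for a non-degenerate morphism $s:\C\to\M(\B)$\,. Using the explicit formulas \eqref{gheonoaie}, I define
\begin{equation*}
\iota_\G:\G\to\U\M(\C),\quad \iota_\G(x):=w(x,\varepsilon),\qquad \iota_{\hG}:\hG\to\U\M(\C),\quad \iota_{\hG}(\xi):=w({\sf e},\xi).
\end{equation*}
Proposition \ref{eredpizduin} (applied to $(\C,\iota_\A,w)$) then guarantees that $(\C,\iota_\A,\iota_\G,\iota_{\hG})$ is a covariant morphism of the covariant structure $\{(\A,\k),(\a,\aa),(\ha,\haa)\}$ in the sense of Definition \ref{stirba}.

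It remains to verify universality. Let $(\B,r,u,v)$ be another covariant morphism of the covariant structure. By the implication $1\Rightarrow 2$ of Proposition \ref{eredpizduin}, the map $w'(x,\xi):=v_\xi u_x$ produces a covariant morphism $(\B,r,w')$ of the twisted $C^*$-dynamical system $(\A,\la,\laa)$. The universal property of $\C$ yields a unique non-degenerate morphism $s:\C\to\M(\B)$ with $w'=s\circ w$ and $r=s\circ\iota_\A$. Evaluating these relations at $(x,\varepsilon)$ and $({\sf e},\xi)$ immediately gives $u=s\circ\iota_\G$ and $v=s\circ\iota_{\hG}$\,, which is \eqref{ghiaur}. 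Uniqueness of $s$ with respect to the four conditions $r=s\circ\iota_\A$, $u=s\circ\iota_\G$, $v=s\circ\iota_{\hG}$ follows from the fact that these determine $s\circ w$ on all of $\G\times\hG$ via $s(w(x,\xi))=s(\iota_{\hG}(\xi))s(\iota_\G(x))=v_\xi u_x$, hence from the uniqueness part of the universal property of the twisted crossed product $\C$.

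The main \emph{potential} obstacle would be if the $w$ obtained from $(u,v)$ were not a genuine covariant morphism of $(\A,\la,\laa)$, but this is precisely the content of Proposition \ref{eredpizduin}, whose proof crucially invoked the compatibility conditions \eqref{dihanie}, \eqref{poarca}, \eqref{spoarca}. Once that proposition is in hand, the present statement is essentially a soft translation of a known universal property, and uniqueness of the bi-product is the standard two-way application of universality producing mutually inverse canonical morphisms.
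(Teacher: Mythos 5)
Your proposal is correct and follows essentially the same route as the paper: realize the bi-product as $\A\!\rtimes_{\la}^{\laa}\!(\G\times\hG)$, define $\iota_\G$ and $\iota_{\hG}$ by evaluating the canonical unitary at $(x,\varepsilon)$ and $({\sf e},\xi)$, and transfer the universal property of the twisted crossed product through Proposition \ref{eredpizduin} via $w(x,\xi)=v_\xi u_x$. Your extra remark making the uniqueness of $s$ explicit is a harmless elaboration of what the paper leaves implicit.
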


\begin{proof}
By an easy abstract argument, if a bi-product exists, it is unique up to a canonical isomorphism. The meaning of this and the proof are the standard ones. 

To prove existence, we rely on Proposition \ref{eredpizduin} and on the universality of the usual twisted crossed products. If $\left\{(\A,\k),(\a,\aa),(\ha,\haa)\right\}$ is a covariant structure, we construct as above the twisted $C^*$-dynamical system $(\A,\la,\laa)$ with group $\G\times\hG$\,. Let $\(\C,\iota_\A,\iota_{\G\times\hG}\)$ be a corresponding twisted crosed product. Recalling (\ref{gheonoaie}) we set
\begin{equation}\label{dragonel}
\iota_\G:\G\rightarrow\U\M(\C)\,,\quad\ \iota_\G(x):=\iota_{\G\times\hG}(x,\varepsilon)\,,
\end{equation}
\begin{equation}\label{dragonez}
\iota_{\hG}:\hG\rightarrow\U\M(\C)\,,\quad\ \iota_{\hG}(\xi):=\iota_{\G\times\hG}({\sf e},\xi)\,.
\end{equation}
From Proposition \ref{eredpizduin} we already know that $\(\C,\iota_\A,\iota_\G,\iota_{\hG}\)$ is a covariant morphism; one must show its universality.
So let $(\B,r,u,v)$ be another covariant morphism and let us define $w$ as in (\ref{zaluga})\,. Since $(\B,r,w)$ is a covariant morphism of $(\A,\la,\laa)$\,, there exists a unique $C^*$-algebraic morphism $s:\C\rightarrow\M(\B)$ such that
\begin{equation}\label{fonfanel}
w=s\circ\iota_{\G\times\hG}\,,\quad r=s\circ\iota_\A\,.
\end{equation}
Then we have
$$
(s\circ\iota_\G)(x)=s\!\[\iota_\G(x)\]=s\!\[\iota_{\G\times\hG}(x,\varepsilon)\]=w(x,\varepsilon)=u(x)
$$
and
$$
(s\circ\iota_{\hG})(\xi)=s\!\[\iota_{\hG}(\xi)\]=s\!\[\iota_{\G\times\hG}({\sf e},\xi)\]=w({\sf e},\xi)=v(\xi)
$$
and we are done.
\qed
\end{proof}

Relying on the twisted actions $(\la,\laa)$ and $(\ra,\raa)$ we get new $C^*$-algebras
\begin{equation*}\label{mumapadurii}
\A_{\la}^{\laa}:=\A\!\rtimes_{\la}^{\laa}\!(\G\times\hG)\quad{\rm with\ laws}\ \big(\overrightarrow{\#},^{\overrightarrow{\#}}\big)
\end{equation*}
and
\begin{equation*}\label{tutapadurii}
\A_{\ra}^{\raa}:=\A\!\rtimes_{\ra}^{\raa}\!(\G\times\hG)\quad{\rm with\ laws}\ \big(\overleftarrow\#,^{\overleftarrow{\#}}\big)\,.
\end{equation*}
They can be viewed as concrete realizations of the bi-product $C^*$-algebra $\A_{(\a,\ha)}^{(\aa,\haa)}$\,. Of course they are isomorpic, being defined by exterior equivalent twisted actions, cf. Remarks \ref{strufat} and \ref{duniath}.
It will be convenient to regard them as the enveloping $C^*$-algebras of the corresponding $L^1$ Banach $^*$-algebras (but the abstract universal approach could also be adopted). At the $L^1$-level the isomorphism is given by $\overrightarrow F\to \overrightarrow F\kappa^*$\,. For further use, we record here the composition laws on $\A_{\la}^{\laa}$
\begin{equation}\label{gulas}
\(\overrightarrow{F}\,\overrightarrow{\#}\,\overrightarrow{G}\)\!(x,\xi)=\!\int_\G\!\int_{\hG}\!dyd\eta \overrightarrow{F}(y,\eta)(\ha_\eta\circ\a_y)\!\[\overrightarrow{G}(y^{-1}x,\eta^{-1}\xi)\]\ha_\eta[\kappa(y,\eta^{-1}\xi)]\haa(\eta,\eta^{-1}\xi)\ha_\xi[\aa(y,y^{-1}x)]\,,
\end{equation}
\begin{equation}\label{gulash}
(\overrightarrow{F}^{\overrightarrow{\#}})(x,\xi)=\Delta_\G(x^{-1})\Delta_{\hG}(\xi^{-1})\aa(x,x^{-1})^*\haa(\xi,\xi^{-1})^*\,\ha_\xi[\k(x,\xi^{-1})^*]\,(\ha_\xi\circ\a_x)\!\[\overrightarrow{F}(x^{-1},\xi^{-1})^*\]
\end{equation}
and on $\A_{\ra}^{\raa}$
\begin{equation}\label{guleras}
\(\overleftarrow{F}\,\overleftarrow{\#}\,\overleftarrow{G}\)\!(x,\xi)=\!\int_\G\!\int_{\hG}\!dyd\eta \overleftarrow{F}(y,\eta)(\a_y\circ\ha_\eta)\!\[\overleftarrow{G}(y^{-1}x,\eta^{-1}\xi)\]\a_y[\kappa(y^{-1}x,\eta)^*]\aa(y,y^{-1}x)\a_x[\haa(\eta,\eta^{-1}\xi)],
\end{equation}
\begin{equation}\label{gulerash}
(\overleftarrow{F}^{\overleftarrow{\#}})(x,\xi)=\Delta_\G(x^{-1})\Delta_{\hG}(\xi^{-1})\haa(\xi,\xi^{-1})^*\aa(x,x^{-1})^*\,\a_x[\k(x^{-1},\xi)]\,(\a_x\circ\ha_\xi)\!\[\overleftarrow{F}(x^{-1},\xi^{-1})^*\].
\end{equation}

By using Remark \ref{ghiveci}, one generates other two twisted actions of the group $\hG\times\G$ in $\A$ as well as other two twisted crossed product $C^*$-algebras isomorphic to the previous ones. They can also be seen as concrete realizations of the bi-product $\A_{(\a,\aa)}^{(\ha,\haa)}$\,.

\medskip
The next Corollary is now obvious. Similar statements hold at the level of (covariant) morphisms.

\medskip
\begin{Corollary}\label{eredpisuin}
There are one-to-one correspondences between:
\begin{enumerate}
\item
Covariant representations $(\H,\pi,U,V)$ of the covariant structure $\left\{(\A,\k),(\a,\aa),(\ha,\haa)\right\}$\,.
\item
Covariant representations $(\H,\pi,W)$ of the twisted $C^*$-dynamical system $(\A,\la,\laa)$ with group $\G\times\hG$\,.
\item
Covariant representations $(\H,\pi,W')$ of the twisted $C^*$-dynamical system $(\A,\ra,\raa)$ with group $\G\times\hG$\,.
\item
Non-degenerate representations of the bi-product $\A_{(\a,\ha)}^{(\aa,\haa)}$\,.
\item
Non-degenerate representations of the $C^*$-algebra $\A_{\la}^{\laa}$\,.
\item
Non-degenerate representations of the $C^*$-algebra $\A_{\ra}^{\raa}$\,.
\end{enumerate}
\end{Corollary}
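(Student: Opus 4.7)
The plan is to assemble the six equivalences from ingredients already in place; the proof is essentially a bookkeeping exercise, with the substantive content deposited into Proposition \ref{eredpizduin}, Proposition \ref{simurg}, and the standard theory of twisted crossed products recalled in Section \ref{fourtundoy}. I will establish the equivalences as a chain $1\Leftrightarrow 2\Leftrightarrow 5\Leftrightarrow 4$ and $2\Leftrightarrow 3\Leftrightarrow 6$, all knitted together.

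First I would specialize Proposition \ref{eredpizduin} to $\B=\mathbb K(\H)$, so that $\M(\B)=\mathbb B(\H)$ and $\U\M(\B)=\mathbb U(\H)$. A non-degenerate morphism $r:\A\to\mathbb B(\H)$ is precisely a non-degenerate representation, and strictly measurable maps into $\mathbb U(\H)$ are strongly measurable (the topologies coincide on the unitary group of $\mathbb B(\H)$). This yields the equivalences $1\Leftrightarrow 2\Leftrightarrow 3$, with the explicit bijections inherited from (\ref{zaluga}) and (\ref{gheonoaie}): namely $W(x,\xi):=V_\xi U_x$, $W'(x,\xi):=U_x V_\xi$, with the inverses $U_x:=W(x,\varepsilon)$, $V_\xi:=W({\sf e},\xi)$, and analogously for $W'$.

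Next, the equivalences $2\Leftrightarrow 5$ and $3\Leftrightarrow 6$ are instances of the standard bijection (recalled in Section \ref{fourtundoy}) between non-degenerate representations of a twisted crossed product and covariant representations of the underlying twisted $C^*$-dynamical system, realized by the integrated form $\pi\!\rtimes\!W$. One applies this once with the twisted action $(\la,\laa)$ of $\G\times\hG$ and once with $(\ra,\raa)$. For $5\Leftrightarrow 6$ on its own, one appeals to the canonical isomorphism $\A_{\la}^{\laa}\cong\A_{\ra}^{\raa}$ coming from the exterior equivalence $(\la,\laa)\overset{\k}{\sim}(\ra,\raa)$ of Remark \ref{strufat}, combined with Remark \ref{duniath}.

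Finally, for $4\Leftrightarrow 5$ (and analogously $4\Leftrightarrow 6$), one invokes Proposition \ref{simurg}, which exhibits $\A_{\la}^{\laa}$ as a realization of the bi-product $\A_{(\a,\ha)}^{(\aa,\haa)}$; any two realizations of the bi-product share the same non-degenerate representation theory via the canonical isomorphism coming from universality. The only item requiring a moment's thought — the potential obstacle, though a very mild one — is the compatibility of the various bijections when composed around the diagram: the pairs $V_\xi U_x$ and $U_x V_\xi$ differ by the multiplier $r[\k(x,\xi)]$, and one must check that this is precisely the $1$-cochain implementing the exterior equivalence $(\la,\laa)\overset{\k}{\sim}(\ra,\raa)$, so that the triangle $2\Leftrightarrow 3\Leftrightarrow 5\Leftrightarrow 6\Leftrightarrow 2$ commutes. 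This is immediate from Remark \ref{strufat}. With that check in place, the six correspondences are canonically identified and the corollary follows.
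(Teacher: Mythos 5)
Your proof is correct and follows exactly the route the paper intends: the paper declares the corollary "obvious" precisely because it is the specialization of Proposition \ref{eredpizduin} to $\B=\mathbb K(\H)$, combined with the standard correspondence between covariant representations and non-degenerate representations of twisted crossed products and the bi-product realizations from Proposition \ref{simurg}. Your additional compatibility check via the exterior equivalence $(\la,\laa)\overset{\k}{\sim}(\ra,\raa)$ is consistent with Remarks \ref{strufat} and \ref{duniath} and adds nothing contrary to the paper's argument.
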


\medskip
\begin{Example}\label{placinta}
{\rm In Example \ref{papaleasca}, given a representation $\varpi$ of the $C^*$-algebra $\A$ in the Hilbert space $\H$\,, we constructed the corresponding induced covariant representation $(\pi,U,V)$ of the covariant structure $\{(\A,\k),(\a,\aa),(\ha,\haa)\}$ in the Hilbert space $\mathscr H=L^2(\G\times\hG;\H)$\,. Applying to it the construction given in the proof of Proposition \ref{eredpizduin}, one gets exactly the induced covariant representation \cite[Def. 3.10]{PR1} $(\mathscr H,\pi,W)$ of the twisted $C^*$-dynamical system $(\A,\la,\laa)$ with group $\G\times\hG$ attached to the initial $\varpi$\,. 
}
\end{Example}

\section{First and second generation twisted crossed products}\label{vipsgrot}

Let $\left\{(\A,\k),(\a,\aa),(\ha,\haa)\right\}$ be a given covariant structure. To associate to it another (particular) covariant structure $\left\{(\A_\a^\aa,{\sf k}),(\b,\bb),(\hb,\hbb)\right\}$\,, we first set $\A_\a^\aa:=\A\!\rtimes_\a^\aa\!\G$ with algebraic laws $(\diamond,^\diamond)$\,. Also set
\begin{equation}\label{imbar}
{\sf k}:\G\times\hG\to \mathcal U\mathcal M(\A_\a^\aa)\,,\quad\ {\sf k}(x,\xi):=\delta_{\sf e}\otimes\k(x,\xi)\,.
\end{equation}
From (\ref{ossiriand}) and (\ref{dorthonion}) and from $\p\!\cdot\!\p_{\A_\a^\aa}\,\le\,\p\!\cdot\!\p_1$ it follows easily that ${\sf k}$ is strictly continuous.

For each $\xi\in\hG$ we define $\hb_\xi:L^1(\G;\A)\rightarrow L^1(\G;\A)$ by
\begin{equation}\label{hobbit}
\[\hb_\xi(f)\]\!(y):=\ha_\xi[f(y)]\k(y,\xi)^*\,,
\end{equation}
while for $\xi,\eta\in\hG$\,, based on the preparations made in section \ref{fourtundoy}, we set
\begin{equation}\label{thalos}
\hbb(\xi,\eta):=\delta_{\sf e}\otimes\haa(\xi,\eta)\in\mathcal U\mathcal M(\A_\a^\aa)\,.
\end{equation}

\begin{Proposition}\label{trol}
The pair $(\hb,\hbb)$ defines a measurable twisted action of $\,\hG$ on $\A_\a^\aa$\,. If $\,(\ha,\haa)$ is continuous, then $(\hb,\hbb)$ is also continuous.
\end{Proposition}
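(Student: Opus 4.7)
The plan is to verify the axioms of a measurable twisted action one by one, reducing each to the three compatibility identities \eqref{dihanie}, \eqref{poarca}, \eqref{spoarca} of the covariant structure together with the twisted-action axioms of $(\a,\aa)$ and $(\ha,\haa)$.

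First I would check that for each fixed $\xi\in\hG$, the map $\hb_\xi$ is a $*$-homomorphism of the Banach $*$-algebra $L^1_{\a,\aa}(\G;\A)$. For multiplicativity, I rewrite $\ha_\xi\circ\a_y=\ad_{\k(y,\xi)^*}\circ\a_y\circ\ha_\xi$ (a restatement of \eqref{dihanie}) and use \eqref{spoarca} to transport $\ha_\xi[\aa(y,y^{-1}x)]$ past the coupling unitaries; the residual $\k$-factors then cancel. For involution compatibility, the special case $y\to x$, $y^{-1}x\to{\sf e}$ of \eqref{spoarca} reconciles $\ha_\xi[\aa(x,x^{-1})]$ with $\a_x[\k(x^{-1},\xi)]$ and $\k(x,\xi)$. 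Since $\|\hb_\xi(f)\|_1=\|f\|_1$, this extends to a $*$-endomorphism of $\A_\a^\aa$ and, uniquely, to $\M(\A_\a^\aa)$; clearly $\hb_\varepsilon=\mathrm{id}$. The twisted-action relation $\hb_\xi\circ\hb_\eta=\ad_{\hbb(\xi,\eta)}\circ\hb_{\xi\eta}$ then follows from the cocycle identity for $(\ha,\haa)$ combined with \eqref{poarca}, once $\ad_{\delta_{\sf e}\otimes\haa(\xi,\eta)}$ is made explicit at the $L^1$-level via \eqref{himlad}. Setting $\eta=\xi^{-1}$ yields invertibility of $\hb_\xi$, so each $\hb_\xi$ is in fact a $*$-automorphism.

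For the cocycle identity $\hbb(\xi,\eta)\diamond\hbb(\xi\eta,\zeta)=\hb_\xi[\hbb(\eta,\zeta)]\diamond\hbb(\xi,\eta\zeta)$, I would first check that the extension of $\hb_\xi$ to $\M(\A_\a^\aa)$ sends $\delta_{\sf e}\otimes m$ to $\delta_{\sf e}\otimes\ha_\xi(m)$; this is a short verification against the defining formulas \eqref{ossiriand}, \eqref{dorthonion} for the double centralizer, using $\k({\sf e},\xi)=1$. Then \eqref{mistymountains} reduces the product of two multipliers supported at ${\sf e}$ to a single $\delta_{\sf e}\otimes[\,\cdot\,]$, and the desired equation collapses to the $2$-cocycle condition for $\haa$. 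Normalization $\hbb(\xi,\varepsilon)=1=\hbb(\varepsilon,\xi)$ is immediate from that of $\haa$ together with $\delta_{\sf e}\otimes 1=1$. For the regularity, strict measurability (resp.\ continuity) of $\hbb$ is inherited from that of $\haa$ via the bound $\|(\delta_{\sf e}\otimes m)\diamond f\|_1\le\|m\|_{\M(\A)}\|f\|_1$; strong measurability (resp.\ continuity) of $\xi\mapsto\hb_\xi$ on $\A_\a^\aa$ is proved on the dense subspace $L^1(\G;\A)$, where $\hb_\xi(f)$ is the pointwise product of $\ha_\xi[f(\cdot)]$ with the strictly measurable (continuous) $\k(\cdot,\xi)^*$, via a standard dominated-convergence argument.

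The main obstacle will be the bookkeeping around $\hb_\xi[\hbb(\eta,\zeta)]$: this expression only makes sense after $\hb_\xi$ has been extended to a $*$-automorphism of $\M(\A_\a^\aa)$, so the order of the steps matters, and the explicit identification of the extension on multipliers of the form $\delta_{\sf e}\otimes m$ is precisely what causes the cocycle equation to reduce to that of $\haa$. Apart from this subtlety, each axiom is obtained by mechanical if lengthy algebra from one of the three covariant-structure compatibility identities.
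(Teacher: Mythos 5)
Your proof is correct and follows essentially the same route as the paper's: the same verification that each $\hb_\xi$ is an ($L^1$-isometric) $*$-automorphism of $L^1_{\a,\aa}(\G;\A)$ via (\ref{dihanie}) and (\ref{spoarca}) (your use of (\ref{spoarca}) at $y=x^{-1}$ for the involution is indeed what the paper's computation does), the same reduction of $\hb_\xi\circ\hb_\eta={\sf ad}_{\hbb(\xi,\eta)}\circ\hb_{\xi\eta}$ to (\ref{poarca}) together with (\ref{himlad}) and (\ref{ladros}), and the same identification $\hb_\xi(\delta_{\sf e}\otimes m)=\delta_{\sf e}\otimes\ha_\xi(m)$ from (\ref{ossiriand})--(\ref{dorthonion}) plus (\ref{mistymountains}), which collapses the $2$-cocycle identity for $\hbb$ to that of $\haa$. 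The only difference is cosmetic and lies in the regularity step: dominated convergence on the dense subspace $L^1(\G)\odot\A$ handles the continuous case, whereas for mere measurability the paper does not use dominated convergence but the Lusin-type restriction criterion of \cite[App. B]{Wi} (continuity on compacts up to small measure), a standard substitute you would need to invoke at that point.
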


\begin{proof} 
1. We need to prove that $\hb_\xi$ is an automorphism of $\A\!\rtimes_\a^\aa\!\G$\,. We only show that $\hb_\xi:L^1(\G;\A)\rightarrow L^1(\G;\A)$ is a $^*$-isomorphism for the twisted crossed product structure; then the extension to the full twisted crossed product is automatic. Clearly $\hb_\xi$ is well-defined and invertible and one has $\hb_\varepsilon={\rm id}$\,.

For the product, using the definitions, (\ref{dihanie}) and (\ref{spoarca}) one gets
$$
\begin{aligned}
\[\hb_\xi(f)\diamond\hb_\xi(g)\]\!(x)&=\int_\G\!dy\[\hb_\xi(f)\]\!(y)\,\a_y\!\left\{\[\hb_\xi(g)\]\!(y^{-1}x)\right\}\aa(y,y^{-1}x)\\
&=\int_\G\!dy\,\ha_\xi[f(y)]\,\k(y,\xi)^*\,(\a_y\circ\ha_\xi)[g(y^{-1}x)]\,\a_y[\k(y^{-1}x,\xi)^*]\aa(y,y^{-1}x)\\
&=\int_\G\!dy\,\ha_\xi[f(y)]\,(\ha_\xi\circ\a_y)[g(y^{-1}x)]\,\k(y,\xi)^*\,\a_y[\k(y^{-1}x,\xi)^*]\aa(y,y^{-1}x)\\
&=\int_\G\!dy\,\ha_\xi[f(y)]\,\ha_\xi\!\left\{\a_y\,[g(y^{-1}x)]\right\}\ha_\xi[\aa(y,y^{-1}x)]\k(x,\xi)^*\\
&=\ha_\xi\!\(\int_\G\!dy\,f(y)\,\a_y\!\left[g(y^{-1}x)\right]\aa(y,y^{-1}x)\)\k(x,\xi)^*=\[\hb_\xi(f\diamond_\a^\aa g)\]\!(x)\,.
\end{aligned}
$$

For the involution, by (\ref{dihanie}) and (\ref{poarca}):
$$
\begin{aligned}
\[\hb_\xi(f)\]^{\diamond}\!(x)&=\Delta_\G(x^{-1})\,\aa(x,x^{-1})^*\,\a_{x}\!\[\hb_\xi(f)(x^{-1})\]^*\\
&=\Delta_\G(x^{-1})\,\aa(x,x^{-1})^*\,\a_{x}\!\left\{\ha_\xi\!\[f(x^{-1})\]\k(x^{-1},\xi)^*\right\}^*\\
&=\Delta_\G(x^{-1})\,\aa(x,x^{-1})^*\,\a_x[\k(x^{-1},\xi)]\,\a_{x}\!\left\{\ha_\xi\!\[f(x^{-1})\]\right\}^*\\
&=\Delta_\G(x^{-1})\,\aa(x,x^{-1})^*\,\a_x[\k(x^{-1},\xi)]\,\k(x,\xi)\,\ha_{\xi}\!\left\{\a_x\!\[f(x^{-1})\]\right\}^*\k(x,\xi)^*\\
&=\Delta_\G(x^{-1})\,\ha_\xi[\aa(x,x^{-1})^*]\,\ha_\xi\!\left\{\a_{x}\!\[f(x^{-1})^*\]\right\}\k(x,\xi)^*\\
&=\ha_\xi\!\left\{\Delta_\G(x^{-1})\aa(x,x^{-1})^*\a_{x}\!\[f(x^{-1})^*\]\right\}\k(x,\xi)^*\\
&=\ha_\xi\!\[f^{\diamond}(x)\]\k(x,\xi)^*=\[\hb_\xi(f^{\diamond})\]\!(x)\,.
\end{aligned}
$$

\medskip
2. For $\xi,\eta\in\G$ we show that $\hb_\xi\circ\hb_\eta={\sf ad}^\diamond_{\hbb(\xi,\eta)}\!\circ\hb_{\xi\eta}$\,. One computes for $x\in\G$ and $f\in L^1(\G;\A)$
$$
\begin{aligned}
\[\(\hb_\xi\circ\hb_\eta\)\!(f)\]\!(x)&=\ha_\xi\!\[\hb_\eta(f)(x)\]\k(x,\xi)^*\\
&=(\ha_\xi\circ\ha_\eta)[f(x)]\ha_\xi[\k(x,\eta)^*]\k(x,\xi)^*\\
&=\haa(\xi,\eta)\,\ha_{\xi\eta}[f(x)]\,\haa(\xi,\eta)^*\ha_\xi[\k(x,\eta)^*]\k(x,\xi)^*\\
&=\haa(\xi,\eta)\,\ha_{\xi\eta}[f(x)]\,\k(x,\xi\eta)^*\a_x[\haa(\xi,\eta)^*]\\
&=\haa(\xi,\eta)\[\hb_{\xi\eta}(f)\]\!(x)\,\a_x[\haa(\xi,\eta)^*]\\
&=\(\hbb(\xi,\eta)\diamond\[\hb_{\xi\eta}(f)\]\diamond\hbb(\xi,\eta)^\diamond\)\!(x)\,.
\end{aligned}
$$
We used (\ref{poarca}); to justify the last equality use (\ref{himlad}), (\ref{ladros})\,.

\medskip
3. Now we show that $\hbb$ is a $2$-cocycle with respect to $\hb$\,. The normalization is clear.
To check the $2$-cocycle identity, from the definition of $\hbb$\,, (\ref{mistymountains}) and the fact (following from (\ref{ossiriand}) and (\ref{dorthonion})) that $\hb_\xi(\delta_{\sf e}\otimes m)=\delta_{\sf e}\otimes\ha_\xi(m)$ one gets
$$
\begin{aligned}
\hbb(\xi,\eta)\diamond\hbb(\xi\eta,\zeta)&=\[\delta_{\sf e}\otimes \haa(\xi,\eta)\]\diamond\[\delta_{\sf e}\otimes \haa(\xi\eta,\zeta)\]\\
&=\delta_{\sf e}\otimes\[\haa(\xi,\eta)\haa(\xi\eta,\zeta)\]\\
&=\delta_{\sf e}\otimes\[\ha_{\xi}\!\(\haa(\eta,\zeta)\)\haa(\xi,\eta\zeta)\]\\
&=\left\{\delta_{\sf e}\otimes\ha_{\xi}\!\[\haa(\eta,\zeta)\]\right\}\diamond\[\delta_{\sf e}\otimes \haa(\xi,\eta\zeta)\]\\
&=\hb_\xi\[\delta_{\sf e}\otimes \haa(\eta,\zeta)\]\diamond\[\delta_{\sf e}\otimes \haa(\xi,\eta\zeta)\]\\
&=\hb_\xi\!\[\hbb(\eta,\zeta)\]\diamond\hbb(\xi,\eta\zeta)\,.
\end{aligned}
$$

\medskip
4. Assuming now that $(\ha,\haa)$ is continuous, we are going to show that $(\tilde\b,\tilde\beta)$ is continuous. We indicate the rather straightforward arguments, because changes of norms are involved.

To show that $\tilde\b$ is strongly continuous, we estimate for $f=\varphi\otimes A$ in the dense algebraic tensor product $L^1(\G)\odot\A$
$$
\begin{aligned}
\p\!\tilde\b_\eta(f)-\tilde\b_\xi(f)\!\p_{\A_\a^\aa}\,\le\,\p\!\tilde\b_\eta(f)-\tilde\b_\xi(f)\!\p_1\,\le\int_\G\!dx\,|\varphi(x)|\,\big\|\ha_\eta(A)\k(x,\eta)^*-\ha_\xi(A)\k(x,\xi)^*\,\big\|_\A\,.
\end{aligned}
$$
By the Dominated Convergence Theorem, the integrability of $\varphi$ and the bound 
$$
\big\|\ha_\eta(A)\k(x,\eta)^*-\ha_\xi(A)\k(x,\xi)^*\big\|_\A\le 2\p\!A\!\p_\A\,,
$$ 
it is enough to prove that for $x\in\G$ the integrant converges to zero when $\eta\to\xi$\,, which is trivial since $\ha$ is strongly continuous and $\k(x,\cdot)$ is strictly continuous. 

Then, using (\ref{ossiriand})
$$
\begin{aligned}
\big\|\tilde\beta(\xi',\eta')\diamond f-\tilde\beta(\xi,\eta)\diamond f\big\|_{\A_\a^\aa}&\le\big\|\,[\delta_{\sf e}\otimes\tilde\alpha(\xi',\eta')]\diamond f-\delta_{\sf e}\otimes\tilde\alpha(\xi,\eta)]\diamond f\,\big\|_1\\
&\le\int_\G\!dx\,|\varphi(x)|\!\p\!\tilde\alpha(\xi',\eta')A- \tilde\alpha(\xi,\eta)A\!\p_\A.
\end{aligned}
$$
Once again it follows that this converges to zero if $(\xi',\eta')\to(\xi,\eta)$, using the Dominated Convergence Theorem, the integrability of $\varphi$ and the fact that $\tilde\aa$ is strictly continuous. Multiplying with $f$ to the left is treated similarly.

\medskip
5. By using the definition of strong or strict measurability, one is lead to show that a map $h$ defined from a Hausdorff, second countable locally space ${\sf X}$ endowed with a Radon measure $\mu$ to a separable Banach space $\mathscr B$ is measurable. The next criterion \cite[App. B]{Wi} reduces this to an easier continuity issue:

{\it A function $h:{\sf X}\to\mathscr B$ is measurable if and only if for any compact set $K\subset{\sf X}$ and any $\epsilon>0$\,, there exists a subset $K^\prime\subset K$ such that $\mu(K\setminus K^{\prime})\le\epsilon$ and the restriction $h|_{K^\prime}$ is continuous.}

Now our measurable case follows rather easily from this and from the previous point 4. To illustrate the case of the action $\hb$\,, we start once again with vectors of the form $f=\varphi\otimes A$\,, where $\varphi\in L^1(\G)$ and $A\in\A$\,. Pick a compact set $K\subset\hG$ and a strictly positive number $\epsilon$\,; for some subset $K'$ of $K$ for which the Haar measure of $K\setminus K'$ is smaller than $\epsilon$\,, the restrictions to $K'$ of the maps $\xi\to\ha_\xi(A)$ and $\xi\to\k(x,\xi)$ are continuous for all $x\in\G$\,. By the argument above, the restriction to $K'$ of the map $\xi\mapsto\hb_\xi(\varphi\otimes A)$ is continuous. This and linearity show that the map $\xi\mapsto\hb_\xi(f)$ is measurable for any vector $f$ belonging to the dense subset $L^1(\G)\odot\A$ of $\A_\a^\aa$\,. Passing to an arbitrary vector is easy by density, applying a $\delta/3$ trick and the criterion again. The strict measurability of $\hbb$ is treated similarly.
\qed
\end{proof}

We define now the twisted action of $\G$ on the twisted crossed product. First, for $x\in\G$\,, let us set
\begin{equation*}\label{lothlann}
\lambda_x:=\delta_x\otimes 1\in\mathcal U\mathcal M(\A_\a^\aa)\,.
\end{equation*}
Deducing strict continuity or measurability from similar properties of the twisted action $(\a,\aa)$ is straightforward, if one takes (\ref{ossiriand}) and (\ref{dorthonion}) into consideration. 
A computation relying on (\ref{ossiriand}) leads to the covariance condition
\begin{equation*}\label{dirnen}
\hb_\xi(\lambda_x)=[\delta_{\sf e}\otimes\k(x,\xi)^*]\diamond\lambda_x={\sf k}(x,\xi)^\diamond\diamond\lambda_x\,,\quad\ \forall\,x\in\G\,,\,\xi\in\hG\,.
\end{equation*}
Along the lines of Example \ref{besteleala}, define $\b:\G\rightarrow\Aut(\A_\a^\aa)$ by
\begin{equation*}\label{pelori}
\b_x(f):=\ad^\diamond_{\lambda_x}(f)=\lambda_x\diamond f\diamond\lambda_x^\diamond
\end{equation*}
and $\bb:\G\times\G\rightarrow\mathcal U\mathcal M(\A_\a^\aa)$ by
\begin{equation*}\label{avathar}
\bb(x,y):=\lambda_x\diamond\lambda_y\diamond\lambda_{xy}^\diamond=\delta_{\sf e}\otimes \aa(x,y)\,.
\end{equation*}

All the calculations above conclude by

\medskip
\begin{Theorem}\label{ainulindale}
If $\,\left\{(\A,\k),(\a,\aa),(\ha,\haa)\right\}$ is a given measurable (resp. continuous) covariant structure, then 
$\big\{(\A\!\rtimes_\a^\aa\!\G,{\sf k}),(\b,\beta),(\tilde{\b},\tilde{\beta})\big\}$ is a measurable (resp. continuous) $\G$-particular covariant structure.
\end{Theorem}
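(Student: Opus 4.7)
The plan is to recognize the claim as the combination of Proposition \ref{trol} (which already gives $(\hb,\hbb)$ as a twisted action of $\hG$ on $\A\!\rtimes_\a^\aa\!\G$ of the stated regularity) with the structural result of Example \ref{besteleala} and Lemma \ref{andaram}, taking $\rho:=\lambda$. Once $\{(\A\!\rtimes_\a^\aa\!\G,{\sf k}),\lambda,(\hb,\hbb)\}$ is shown to be a $\G$-particular semi-covariant structure, Lemma \ref{andaram} produces the full covariant structure, and the formulas $\b_x=\ad^\diamond_{\lambda_x}$, $\bb(x,y)=\lambda_x\diamond\lambda_y\diamond\lambda_{xy}^\diamond$ appearing in the statement are exactly those prescribed by that lemma. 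The strict measurability (resp. continuity) of $\lambda$ and of ${\sf k}$, together with the normalizations $\lambda_{\sf e}=1$ and ${\sf k}(x,\varepsilon)=1={\sf k}({\sf e},\xi)$, are immediate from the multiplier formulas (\ref{ossiriand})--(\ref{dorthonion}), the bound $\p\!\cdot\!\p_{\A_\a^\aa}\!\le\,\p\!\cdot\!\p_1$ and the corresponding properties of $\k$.

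The only non-trivial verification is the covariance relation $\hb_\xi(\lambda_x)={\sf k}(x,\xi)^\diamond\diamond\lambda_x$, which by (\ref{mistymountains}) and (\ref{ladros}) is equivalent to $\hb_\xi(\lambda_x)=\delta_x\otimes\k(x,\xi)^*$. I would establish it at the multiplier level by testing against an arbitrary $f\in L^1(\G;\A)$. On the one hand, (\ref{ossiriand}) together with the definition (\ref{hobbit}) of $\hb_\xi$ yields
\[
[\hb_\xi(\lambda_x\diamond f)](y)=(\ha_\xi\circ\a_x)[f(x^{-1}y)]\,\ha_\xi[\aa(x,x^{-1}y)]\,\k(y,\xi)^*.
\]
On the other hand, computing $[(\delta_x\otimes\k(x,\xi)^*)\diamond\hb_\xi(f)](y)$ via (\ref{ossiriand}) and then invoking (\ref{dihanie}) to trade $\a_x\circ\ha_\xi$ for $\ha_\xi\circ\a_x$ at the price of conjugation by $\k(x,\xi)$ produces
\[
(\ha_\xi\circ\a_x)[f(x^{-1}y)]\cdot\k(x,\xi)^*\,\a_x[\k(x^{-1}y,\xi)^*]\,\aa(x,x^{-1}y).
\]
The equality of the two expressions is precisely relation (\ref{spoarca}) applied with the pair $(x,x^{-1}y)$, rearranged as $\ha_\xi[\aa(x,x^{-1}y)]\k(y,\xi)^*=\k(x,\xi)^*\a_x[\k(x^{-1}y,\xi)^*]\aa(x,x^{-1}y)$. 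Since $\hb_\xi$ extends uniquely to an automorphism of $\M(\A\!\rtimes_\a^\aa\!\G)$ by Proposition \ref{trol}, the identity on $L^1$-elements pins down $\hb_\xi(\lambda_x)$ as claimed.

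With the covariance secured, Lemma \ref{andaram} completes the proof and the regularity assumption transfers without additional work. The principal obstacle is precisely the covariance computation above: it is the only step at which (\ref{spoarca}) is used essentially, which confirms that this cocycle-type relation is exactly the ingredient needed in order that $\lambda$ interact correctly with $\hb$ through the coupling ${\sf k}$ inside the multiplier algebra of the twisted crossed product. All other verifications are either immediate or amount to repackaging Proposition \ref{trol}.
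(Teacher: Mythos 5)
Your proposal is correct and follows essentially the same route as the paper: Proposition \ref{trol} for $(\hb,\hbb)$, the observation that $\lambda_x=\delta_x\otimes 1$ and ${\sf k}$ inherit the required regularity from (\ref{ossiriand})--(\ref{dorthonion}) and the bound $\p\!\cdot\!\p_{\A_\a^\aa}\le\p\!\cdot\!\p_1$, and the covariance relation $\hb_\xi(\lambda_x)={\sf k}(x,\xi)^\diamond\diamond\lambda_x$ placing everything in the framework of Example \ref{besteleala} and Lemma \ref{andaram}. Your detailed verification of that covariance relation (the paper only says it is ``a computation relying on (\ref{ossiriand})'') is accurate, and you correctly identify (\ref{dihanie}) and (\ref{spoarca}) as exactly the axioms that make it work.
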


\medskip
Starting with the same covariant structure $\left\{(\A,\k),(\a,\aa),(\ha,\haa)\right\}$\,, one can also construct a $\hG$-particular covariant structure $\left\{\big(\A_\ha^{\haa},\tilde{\sf k}\big),(\mathrm c,\gamma),(\tilde\c,\tilde\cc)\right\}$. We set $\A_\ha^{\haa}:=\A\!\rtimes_\ha^{\haa}\!\hG$\,, with generic elements ${\sf f,g}$ and algebraic laws $(\tilde\diamond,^{\tilde\diamond})$\,. The new coupling function is 
\begin{equation*}\label{jumol}
\tilde{\sf k}:\G\times\hG\to\U\M\big(\A_\ha^{\haa}\big)\,,\quad\ \tilde{\sf k}(x,\xi):=\delta_\varepsilon\otimes\k(x,\xi)^*\,.
\end{equation*}
The two twisted actions are defined similarly as above, by changing suitably the roles of the groups $\G$ and $\hG$\,. Explicitly one has
(here $1$ is the unit of $\M(\A)$ and ${\sf f}\in L^1(\hG;\A)$)\,:
\begin{equation}\label{pippin1}
[\c_x({\sf f})](\zeta)=\a_x[{\sf f}(\zeta)]\,\k(x,\zeta)\,,
\end{equation}
\begin{equation*}\label{baggins2}
\tilde\c_\xi({\sf f})=
(\delta_\xi\otimes 1)\,\tilde\diamond\,{\sf f}\,\tilde\diamond\,(\delta_\xi\otimes 1)^{\tilde\diamond}\,,
\end{equation*}
\begin{equation}\label{merry1}
\cc(x,y)=\delta_{\varepsilon}\otimes\aa(x,y)\,,
\end{equation}
\begin{equation*}\label{taleth2}
\tilde\cc(\xi,\eta)=(\delta_\xi\otimes 1)\,\tilde\diamond\,(\delta_\eta\otimes 1)\,\tilde\diamond\,(\delta_{\xi\eta}\otimes 1)^{\,\tilde\diamond} =\delta_{\varepsilon}\otimes\haa(\xi,\eta)\,.
\end{equation*}

Similarly as above one proves

\medskip
\begin{Theorem}\label{ailunindale}
If $\,\left\{(\A,\k),(\a,\aa),(\ha,\haa)\right\}$ is a given measurable (resp. continuous) covariant structure, then
$\big\{\big(\A\!\rtimes_\ha^\haa\tilde\G,\tilde{\sf k}\big),(\c,\gamma),(\tilde{\c},\tilde{\gamma})\big\}$ is a measurable (resp. continuous) $\hG$-particular covariant structure.
\end{Theorem}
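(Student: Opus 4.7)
The strategy is to follow, step for step, the proof of Theorem \ref{ainulindale} (Proposition \ref{trol} followed by Lemma \ref{andaram}), with the roles of $(\G,\a,\aa)$ and $(\hG,\ha,\haa)$ systematically interchanged. Since the compatibility (\ref{dihanie}) is not symmetric under this exchange --- applying $\ad_{\k(x,\xi)^*}$ on the left one finds $\ha_\xi\circ\a_x = \ad_{\k(x,\xi)^*}\circ\a_x\circ\ha_\xi$ --- the outcome is a $\hG$-particular rather than a $\G$-particular covariant structure (cf.\ Example \ref{rhovanion}), and the adjoints on $\k$ migrate into the definitions of $\tilde{\sf k}$ and (\ref{pippin1}).

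The first step is to show that $(\c,\cc)$ as defined by (\ref{pippin1}) and (\ref{merry1}) is a measurable (resp.\ continuous) twisted action of $\G$ on $\A_\ha^\haa$, by mimicking the four parts of the proof of Proposition \ref{trol}: (i) each $\c_x$ extends to a $^*$-automorphism of $\A_\ha^\haa$, checked on $L^1(\hG;\A)$ by direct manipulation using the composition laws of $\A_\ha^\haa$; (ii) $\c_x\circ\c_y = \ad^{\tilde\diamond}_{\cc(x,y)}\circ\c_{xy}$; (iii) $\cc$ is a $2$-cocycle with respect to $\c$; (iv) strict measurability (or continuity) of $(\c,\cc)$. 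Each identity is obtained from the corresponding step of Proposition \ref{trol} by the swap above. The only substantive change is that the role previously played by (\ref{spoarca}) is now played by (\ref{poarca}): the two four-term identities are exchanged by $(\G,\a,\aa)\leftrightarrow(\hG,\ha,\haa)$, as one verifies immediately by inspecting their forms.

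Next, setting $\tilde\rho_\xi := \delta_\xi\otimes 1 \in\U\M(\A_\ha^\haa)$, the formulas in the statement together with the obvious analogues of (\ref{mistymountains}) and (\ref{ladros}) inside $\A_\ha^\haa$ give at once $\tilde\c_\xi = \ad^{\tilde\diamond}_{\tilde\rho_\xi}$ and $\tilde\cc(\xi,\eta) = \tilde\rho_\xi\tilde\diamond\tilde\rho_\eta\tilde\diamond\tilde\rho_{\xi\eta}^{\,\tilde\diamond}$, placing us within the framework of Example \ref{rhovanion}. The covariance condition $\c_x(\tilde\rho_\xi) = \tilde{\sf k}(x,\xi)\tilde\diamond\tilde\rho_\xi$ is a short computation: extending (\ref{pippin1}) to multipliers yields $\c_x(\delta_\xi\otimes 1)$ explicitly, and one matches it against $\tilde{\sf k}(x,\xi)\tilde\diamond(\delta_\xi\otimes 1)$ computed via the analogue of (\ref{mistymountains}). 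The $\hG$-version of Lemma \ref{andaram} --- whose proof is formally identical to that of Lemma \ref{andaram}, again after the exchange of (\ref{poarca}) with (\ref{spoarca}) --- then gives that $\left\{(\A_\ha^\haa,\tilde{\sf k}),(\c,\cc),(\tilde\c,\tilde\cc)\right\}$ is a genuine covariant structure in the sense of Definition \ref{stalceala}.

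The only obstacle is bookkeeping: tracking the positions of the adjoints on $\k$ introduced by the asymmetry of (\ref{dihanie}), and being careful that (\ref{poarca}) and (\ref{spoarca}) get exchanged under the swap. No new compatibility between $\k$, $\a$ and $\ha$ beyond that used in Proposition \ref{trol} and Lemma \ref{andaram} is needed, so the proof is essentially a systematic translation.
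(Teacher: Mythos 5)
Your proposal follows exactly the route the paper intends: Theorem \ref{ailunindale} is stated there with "similarly as above one proves", i.e. by the same symmetric argument you describe, and your key observations are correct — under the exchange of $(\G,\a,\aa)$ with $(\hG,\ha,\haa)$ (with $\k(x,\xi)$ replaced by $\k(x,\xi)^*$, forced by the asymmetry of (\ref{dihanie})) the identities (\ref{poarca}) and (\ref{spoarca}) trade places, the four steps of Proposition \ref{trol} go through for $(\c,\cc)$, and $\tilde\c,\tilde\cc$ are implemented by the $1$-cochain $\tilde\rho_\xi=\delta_\xi\otimes 1$, so the result is a $\hG$-particular structure in the sense of Example \ref{rhovanion}.

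One bookkeeping point does not check out verbatim, and since adjoint bookkeeping is precisely what you flag as the only obstacle, you should resolve it explicitly. Extending (\ref{pippin1}) to multipliers and using (\ref{dihanie}) and (\ref{poarca}) one gets
$$
\c_x(\delta_\xi\otimes 1)=\delta_\xi\otimes\k(x,\xi)=\[\delta_\varepsilon\otimes\k(x,\xi)\]\,\tilde\diamond\,(\delta_\xi\otimes 1)\,,
$$
whereas with the paper's $\tilde{\sf k}(x,\xi)=\delta_\varepsilon\otimes\k(x,\xi)^*$ one has $\tilde{\sf k}(x,\xi)\,\tilde\diamond\,(\delta_\xi\otimes 1)=\delta_\xi\otimes\k(x,\xi)^*$. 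So the identity you assert, $\c_x(\tilde\rho_\xi)=\tilde{\sf k}(x,\xi)\,\tilde\diamond\,\tilde\rho_\xi$, fails by one adjoint (already for $\A=\mathbb C$ and a $\mathbb T$-valued bicharacter $\k$); what is true is $\c_x(\tilde\rho_\xi)=\tilde{\sf k}(x,\xi)^{\tilde\diamond}\,\tilde\diamond\,\tilde\rho_\xi$. Equivalently, the first-generation structure satisfies the covariance condition of Example \ref{rhovanion}, in the $(\G,\hG)$ order your write-up uses, with coupling $\delta_\varepsilon\otimes\k(x,\xi)$ and not with the paper's $\tilde{\sf k}$; this unstarred coupling is also what Definition \ref{stirba} demands, since $u_xv_\xi=r[\k(x,\xi)]v_\xi u_x=(r\rtimes v)[\delta_\varepsilon\otimes\k(x,\xi)]\,v_\xi u_x$. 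The starred $\tilde{\sf k}$ is exactly what your swap produces when the structure is read over the ordered pair $(\hG,\G)$ together with the Example \ref{besteleala}-type condition (\ref{gargaroth}); keeping instead the $(\G,\hG)$ order of Example \ref{rhovanion}, you should either drop the adjoint from $\tilde{\sf k}$ (most plausibly a typo in the paper) or state the covariance with $\tilde{\sf k}(x,\xi)^{\tilde\diamond}$. None of this affects the remaining steps of your argument.
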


\medskip
All the $2$-cocycles of the first generation are just tensor amplifications of those of the zero generation. At the level of actions, this is no longer true. But it does hold on certain $^*$-subalgebras, as shown by the next result.

\medskip
\begin{Lemma}\label{peregrin}
For every $x\in\G\,,\,\xi\in\hG$ and $m\in\M(\A)$ we have
\begin{equation}\label{anaire}
\b_x(\delta_{\sf e}\otimes m)=\delta_{\sf e}\otimes\a_x(m)\,,
\end{equation}
\begin{equation}\label{nienor}
\hb_\xi(\delta_{\sf e}\otimes m)=\delta_{\sf e}\otimes\ha_\xi(m)\,,
\end{equation}
\begin{equation}\label{elwing}
\c_x(\delta_{\varepsilon}\otimes m)=\delta_{\varepsilon}\otimes\a_x(m)\,,
\end{equation}
\begin{equation}\label{dior}
\tilde\c_\xi(\delta_{\varepsilon}\otimes m)=\delta_{\varepsilon}\otimes\ha_\xi(m)\,.
\end{equation}
\end{Lemma}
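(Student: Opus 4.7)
The plan is to treat the four identities in two groups. Identities (\ref{anaire}) and (\ref{dior}) concern the automorphisms $\b_x$ and $\tilde\c_\xi$, both defined as inner automorphisms by the unitary multipliers $\lambda_x=\delta_x\otimes 1\in\U\M(\A_\a^\aa)$ and $\delta_\xi\otimes 1\in\U\M(\A_\ha^\haa)$ respectively, so they reduce directly to the multiplier product formulas (\ref{mistymountains}) and (\ref{huan}). Identities (\ref{nienor}) and (\ref{elwing}) concern $\hb_\xi$ and $\c_x$, which are defined on the $L^1$-layer by (\ref{hobbit}) and (\ref{pippin1}) and must be lifted to the multiplier algebra before being evaluated on $\delta_{\sf e}\otimes m$ or $\delta_\varepsilon\otimes m$.

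For (\ref{anaire}) I would compute
\begin{equation*}
\b_x(\delta_{\sf e}\otimes m)=(\delta_x\otimes 1)\diamond(\delta_{\sf e}\otimes m)\diamond(\delta_x\otimes 1)^\diamond
\end{equation*}
by two successive applications of (\ref{mistymountains}), combined with $(\delta_x\otimes 1)^\diamond=\delta_{x^{-1}}\otimes\aa(x^{-1},x)^*$ from (\ref{huan}). The outcome is $\delta_{\sf e}\otimes\{\a_x(m)\,\a_x[\aa(x^{-1},x)^*]\,\aa(x,x^{-1})\}$, and the residual scalar factor collapses to $1$ via the $2$-cocycle identity taken at $(x,x^{-1},x)$, which gives $\a_x[\aa(x^{-1},x)]=\aa(x,x^{-1})$. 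Identity (\ref{dior}) follows in exactly the same way inside $\A_\ha^\haa$, using the $\hG$-analogues of (\ref{mistymountains}) and (\ref{huan}) and the cocycle identity for $\haa$.

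For (\ref{nienor}), since $\delta_{\sf e}\otimes m$ is only a unitary multiplier, I would identify $\hb_\xi(\delta_{\sf e}\otimes m)$ by testing its left centralizer on an arbitrary $f\in L^1(\G;\A)$ through the relation $\hb_\xi(M)\diamond\hb_\xi(f)=\hb_\xi(M\diamond f)$. Using (\ref{himlad}) and (\ref{hobbit}), the right-hand side evaluates at $x$ to $\ha_\xi(m)\,\ha_\xi[f(x)]\,\k(x,\xi)^*$, which matches $[(\delta_{\sf e}\otimes\ha_\xi(m))\diamond\hb_\xi(f)](x)$ by another application of (\ref{himlad}). A symmetric check on the right centralizer (using $f\diamond(\delta_{\sf e}\otimes m)$ and (\ref{dorthonion})) pins down the unique multiplier $\delta_{\sf e}\otimes\ha_\xi(m)$. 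Identity (\ref{elwing}) is proved by the same testing argument on $L^1(\hG;\A)$, with (\ref{pippin1}) playing the role of (\ref{hobbit}) and the $\hG$-version of (\ref{himlad}) playing the role of (\ref{himlad}); the fact that $\c_x$ respects products is part of the analogue of Proposition \ref{trol} for the $\hG$-particular covariant structure.

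The only delicate point is conceptual rather than computational: the formulas (\ref{hobbit}) and (\ref{pippin1}) live on the Banach $^*$-algebra $L^1$-level, whereas (\ref{nienor}) and (\ref{elwing}) speak about the unique extensions of $\hb_\xi$ and $\c_x$ to the multiplier algebras, so one must pass through the double-centralizer viewpoint of Section \ref{fourtundoy} to recognise $\delta_{\sf e}\otimes\ha_\xi(m)$ and $\delta_\varepsilon\otimes\a_x(m)$ as the correct images. Once this lifting is explicit, each identity reduces to a short manipulation with the multiplier product rules, with a single appeal to the $2$-cocycle condition in the two inner-automorphism cases.
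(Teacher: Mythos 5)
Your proof is correct and follows essentially the same route as the paper: the inner-automorphism cases (\ref{anaire}) and (\ref{dior}) are handled by the multiplier product rules (\ref{mistymountains}), (\ref{huan}) together with the $2$-cocycle identity, and (\ref{nienor}), (\ref{elwing}) by checking the left and right centralizer identities against arbitrary $L^1$-elements via (\ref{ossiriand}) and (\ref{dorthonion}). The only detail worth flagging is that the right-sided centralizer check also uses the compatibility relation (\ref{dihanie}) to move $\k(x,\xi)^*$ past $(\ha_\xi\circ\a_x)(m)$, a point left implicit both in your write-up and in the paper's own argument.
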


\begin{proof}
One has by (\ref{mistymountains}) and (\ref{huan})
$$
\begin{aligned}
\b_x(\delta_{\sf e}\otimes m)&=(\delta_x\otimes 1)\diamond(\delta_{\sf e}\otimes m)\diamond[\delta_{x^{-1}}\otimes \aa(x^{-1},x)^*]\\
&=[\delta_x\otimes \a_x(m)]\diamond[\delta_{x^{-1}}\otimes \aa(x^{-1},x)^*]\\
&=\delta_{\sf e}\otimes\left\{\a_x(m)\a_x\!\[\aa(x^{-1},x)^*\]\aa(x,x^{-1})\right\}\\
&=\delta_{\sf e}\otimes\a_x(m)\,,
\end{aligned}
$$
where the $2$-cocycle property of $\aa$ has been used for the last equality. 
To prove (\ref{nienor}) one must show for $g\in L^1(\G;\A)$ 
\begin{equation*}\label{zana}
\hb_\xi\!\[(\delta_{\sf e}\otimes m)\diamond g\]=\[\delta_{\sf e}\otimes \ha_\xi(m)\]\diamond \hb_\xi(g)\quad{\rm and}\quad\hb_\xi\!\[g\diamond(\delta_{\sf e}\otimes m)\]=\hb_\xi(g)\diamond\[\delta_{\sf e}\otimes \ha_\xi(m)\]\,.
\end{equation*}
This follows straightforwardly from (\ref{ossiriand}), (\ref{dorthonion}) and the definition of $\hb_\xi$\,.
Proving (\ref{elwing}) and (\ref{dior}) is similar.
\qed
\end{proof}

Starting from the covariant structure $\left\{(\A,\k),(\a,\aa),(\ha,\haa)\right\}$ and applying the twisted crossed product construction, we obtained new (particular) measurable covariant structures $\left\{\big(\A_\a^\aa,{\sf k}\big),(\b,\bb),(\hb,\hbb)\right\}$ and $\left\{\big(\A_\ha^{\haa},\tilde{\sf k}\big),(\mathrm c,\gamma),(\tilde\c,\tilde\cc)\right\}$\,. 
With all these objects one can construct (at least) two "second generation" $C^*$-algebras (they will be compared in the next section). First, one has 
\begin{equation*}\label{valinor}
\A_{\a,\hb}^{\aa,\hbb}\equiv\big(\A_{\a}^{\aa}\big)_\hb^\hbb:=(\A\rtimes_\a^\aa\G)\!\rtimes_\hb^{\hbb}\,\hG\,,
\end{equation*} 
with elements $F,G$ and algebraic structure $\big(\Box,^\Box\big)$\,. The second one is 
\begin{equation*}\label{nargothrond}
\A_{\ha,\c}^{\haa,\cc}\equiv\big(\A_{\ha}^{\haa}\big)_\c^\cc:=(\A\rtimes_\ha^{\haa}\hG)\!\rtimes_\c^{\cc}\G\,,
\end{equation*} 
with composition laws $\big(\tilde\Box,^{\tilde\Box})$ and elements ${\sf F,G}$\,. We recall that they also depend on  the coupling function $\k$\,. 

\medskip
\begin{Remark}\label{alatar}
{\rm There are other two (less interesting) second generation $C^*$-algebras
\begin{equation*}\label{vallinor}
\A_{\a,\b}^{\aa,\bb}\equiv(\A_{\a}^{\aa})_\b^\bb:=(\A\rtimes_\a^\aa\G)\!\rtimes_\b^{\bb}\G\quad{\rm and}\quad\A_{\ha,\tilde\c}^{\haa,\tilde\cc}\equiv(\A_{\ha}^{\haa})_{\tilde\c}^{\tilde\cc}:=(\A\rtimes_\ha^\haa\hG)\!\rtimes_{\tilde\c}^{\tilde\cc}\,\hG\,.
\end{equation*} 
}
\end{Remark}

\section{They are isomorphic}\label{cortandui}

The purpose now is to show that the second generation twisted crossed products $\A_{\a,\hb}^{\aa,\hbb}$ and $\A_{\ha,\c}^{\haa,\cc}$ are isomorphic and constitute realizations of the bi-product associated to a given covariant structure $\{(\A,\k),(\a,\aa),(\ha,\haa)\}$\,.

\medskip
\begin{Theorem}\label{eredluin}
There are one-to-one correspondences between:
\begin{enumerate}
\item
Covariant morphisms of the covariant structure $\left\{(\A,\k),(\a,\aa),(\ha,\haa)\right\}$\,.
\item
Non-degenerate morphisms of the $C^*$-algebra $\A_{\a,\hb}^{\aa,\hbb}$\,.
\item
Non-degenerate morphisms of the $C^*$-algebra $\A_{\ha,\c}^{\haa,\cc}$\,.
\end{enumerate}
\end{Theorem}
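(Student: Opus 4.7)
\emph{Approach.} The plan is to apply the universal property of twisted crossed products twice and to identify all the pieces using Lemma~\ref{peregrin} and the covariance identity $\hb_\xi(\lambda_x) = {\sf k}(x,\xi)^\diamond \diamond \lambda_x$ established in Section~\ref{vipsgrot}. By definition $\A_{\a,\hb}^{\aa,\hbb} = (\A_\a^\aa)\!\rtimes_\hb^\hbb\!\hG$, so non-degenerate morphisms into $\M(\B)$ are in one-to-one correspondence with covariant morphisms $(\B,\rho,v)$ of the twisted $C^*$-dynamical system $(\A_\a^\aa,\hb,\hbb,\hG)$; a second application of universality then splits $\rho$ into a covariant morphism of $(\A,\a,\aa,\G)$. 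The correspondence 1$\Leftrightarrow$3 is entirely analogous, with the roles of the two groups swapped and Theorem~\ref{ailunindale} used in place of Theorem~\ref{ainulindale}.

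\emph{From 2 to 1.} Given a non-degenerate morphism $\Pi:\A_{\a,\hb}^{\aa,\hbb}\to\M(\B)$, universality produces a covariant morphism $(\B,\rho,v)$ of $(\A_\a^\aa,\hb,\hbb,\hG)$, and then a covariant morphism $(\B,r,u)$ of $(\A,\a,\aa,\G)$ with $r(A):=\rho(\delta_{\sf e}\otimes A)$ and $u_x:=\rho(\lambda_x)$. The first condition of Definition~\ref{stirba} is then immediate. The second is obtained by applying $v_\xi\rho(f)v_\xi^*=\rho[\hb_\xi(f)]$ to $f=\delta_{\sf e}\otimes A$ and invoking Lemma~\ref{peregrin}, which yields $v_\xi r(A)v_\xi^*=r[\ha_\xi(A)]$; the twist identity $v_\xi v_\eta=r[\haa(\xi,\eta)]v_{\xi\eta}$ follows from $\hbb(\xi,\eta)=\delta_{\sf e}\otimes\haa(\xi,\eta)$. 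The third condition comes from applying the same equivariance to $\lambda_x$ and using $\hb_\xi(\lambda_x) = {\sf k}(x,\xi)^\diamond \diamond \lambda_x$: this gives $v_\xi u_x v_\xi^* = r[\k(x,\xi)^*]u_x$, which rearranges to $u_x v_\xi = r[\k(x,\xi)]v_\xi u_x$.

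\emph{From 1 to 2.} Conversely, starting from $(\B,r,u,v)$, the pair $(r,u)$ integrates to a non-degenerate morphism $\rho:\A_\a^\aa\to\M(\B)$. The plan is to show that $(\B,\rho,v)$ is a covariant morphism of $(\A_\a^\aa,\hb,\hbb,\hG)$. The twist identity is immediate from $\hbb(\xi,\eta)=\delta_{\sf e}\otimes\haa(\xi,\eta)$, since it reduces to $v_\xi v_\eta=r[\haa(\xi,\eta)]v_{\xi\eta}$. For the equivariance $v_\xi\rho(f)v_\xi^*=\rho[\hb_\xi(f)]$, I would check it on the two classes of generating unitary multipliers of $\A_\a^\aa$: on $\delta_{\sf e}\otimes A$ it reduces (via Lemma~\ref{peregrin}) to the $\ha$-covariance of $(r,v)$, and on $\lambda_x$ it reduces to the commutation relation $u_x v_\xi = r[\k(x,\xi)] v_\xi u_x$ combined again with $\hb_\xi(\lambda_x)={\sf k}(x,\xi)^\diamond\diamond\lambda_x$. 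A third appeal to universality then produces the desired $\Pi:\A_{\a,\hb}^{\aa,\hbb}\to\M(\B)$.

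\emph{Mutual inversion and main obstacle.} The two constructions are mutually inverse because of the uniqueness clauses in the universal properties invoked: $(r,u,v)$ determines $\rho$ and hence $\Pi$ uniquely, while $\Pi$ in turn determines $(\rho,v)$ and then $(r,u)$ uniquely. The only genuinely subtle point is justifying that the $\hb$-equivariance of $\rho$ is indeed controlled by its values on the multipliers of the form $\delta_{\sf e}\otimes A$ and $\lambda_x=\delta_x\otimes 1$; this rests on non-degeneracy of $\rho$ together with the description of $\U\M(\A_\a^\aa)$ reviewed in Section~\ref{fourtundoy}, which ensures that products of these two families lie strictly densely in $\M(\A_\a^\aa)$. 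No genuinely new calculation is needed, as all the relevant identities are already assembled in Section~\ref{vipsgrot}.
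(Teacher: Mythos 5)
Your proposal is correct and shares the paper's overall architecture (decompose a non-degenerate morphism of $\A_{\a,\hb}^{\aa,\hbb}$ twice via the universal property of twisted crossed products, and conversely integrate a quadruple $(\B,r,u,v)$ twice), and in the passage from item 2 to item 1 your treatment of the first two conditions of Definition \ref{stirba} (Lemma \ref{peregrin} for the $\ha$-covariance, $\hbb(\xi,\eta)=\delta_{\sf e}\otimes\haa(\xi,\eta)$ for the twist) is the same as the paper's. Where you genuinely diverge is the crucial third condition: the paper derives $u_x v_\xi=r[\k(x,\xi)]v_\xi u_x$ by a long computation on the total family $\varphi\otimes\psi\otimes A$, applying the covariance of $(\B,R,v)$ only to honest $L^1$-elements $g_x(\cdot)=\varphi(x^{-1}\cdot)\a_x(A)\aa(x,x^{-1}\cdot)$ and invoking (\ref{dihanie}) and (\ref{spoarca}) together with non-degeneracy of $\mathcal R$, deliberately never evaluating the covariance relation on multipliers; you instead apply the covariance identity, strictly extended to $\M(\A_\a^\aa)$, to the unitary multiplier $\lambda_x$ and quote $\hb_\xi(\lambda_x)={\sf k}(x,\xi)^\diamond\diamond\lambda_x$ from Section \ref{vipsgrot}, obtaining $v_\xi u_x v_\xi^*=r[\k(x,\xi)^*]u_x$ in one line. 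Both routes are valid: yours buys brevity (the cocycle identities are hidden inside the already-proved covariance condition for $\lambda$), while the paper's is self-contained at the $L^1$ level; the price of your shortcut is the auxiliary lemma, standard but not recorded in the paper, that a covariant morphism $(\B,R,v)$ of $(\A_\a^\aa,\hb,\hbb)$ satisfies $v_\xi \bar R(m)v_\xi^*=\bar R[\hb_\xi(m)]$ for every $m\in\M(\A_\a^\aa)$ (multiply by $R[\hb_\xi(f)]$, $f\in\A_\a^\aa$, and use non-degeneracy), together with $\bar R(\lambda_x)=u_x$ and $\bar R(\delta_{\sf e}\otimes m)=\bar r(m)$. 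One small refinement for your direction from 1 to 2: checking the equivariance $v_\xi\rho(f)v_\xi^*=\rho[\hb_\xi(f)]$ only on the multipliers $\delta_{\sf e}\otimes A$ and $\lambda_x$ does not by itself give it on $\A_\a^\aa$ merely because products of these families are strictly dense in $\M(\A_\a^\aa)$; you should either note that $\varphi\otimes A=\int_\G \varphi(x)\,(\delta_{\sf e}\otimes A)\diamond\lambda_x\,dx$ with both sides of the identity strictly continuous on bounded sets and compatible with this integral, or simply perform the paper's one-line integral computation on $L^1(\G;\A)$, which is exactly the pointwise version of your multiplier identity. With that sentence added your argument is complete, and the correspondence with item 3 follows symmetrically, as you indicate.
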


\begin{proof}
If $(\B,r,u,v)$ is given as in Definition \ref{stirba}, we are going to construct covariant morphisms 
$$
r_{u,v}:\A_{\a,\hb}^{\aa,\hbb}\to\M(\B)\quad{\rm and}\quad r_{v,u}:\A_{\ha,\c}^{\haa,\cc}\to\M(\B)\,.
$$
Using $(\B,r,u)$ we first construct the integrated form $r_u:=r\!\rtimes\!u:\A_{\a}^{\aa}\to\M(\B)$\,.
Let us check that $(\B,r_u,v)$ is a covariant morphism of $(\A_{\a}^{\aa},\hb,\hbb)$\,. First, for $f\in L^1(\G;\A)$  and $\xi\in\hG$ one has
$$
\begin{aligned}
v_\xi r_u(f)v_\xi^*&=\int_\G\!dx\,v_\xi r[f(x)]v_\xi^*\,v_\xi u_x v_\xi^*\\
&=\int_\G\!dx\,r\!\[\ha_\xi(f(x))\]r[\k(x,\xi)^*]\,u_x\\
&=\int_\G\!dx\,r\!\[(\hb_\xi f)(x)\]u_x=r_u\!\[\hb_\xi(f)\].
\end{aligned}
$$
Then, since $(\B,r,v)$ is a covariant representation of $(\A,\ha,\haa)$\,, for $\xi,\eta\in\hG$ we have $v_\xi v_\eta v_{\xi\eta}^*=r\!\[\haa(\xi,\eta)\]$.
Therefore it is enough to prove that $r_u\!\[\hbb(\xi,\eta)\]=r\!\[\haa(\xi,\eta)\]$\,. For $g\in L^1(\G;\A)$ one computes using (\ref{ossiriand})
$$
\begin{aligned}
r_u\!\[\hbb(\xi,\eta)\diamond g\]&=\int_\G\!dx\,r\big\{[(\delta_{\sf e}\otimes\haa(\xi,\eta))\diamond g](x)\big\}u_x\\
&=\int_\G\!dx\,r\!\left\{\haa(\xi,\eta)g(x)\right\}u_x=r\!\[\haa(\xi,\eta)\]r_u(g)\,.
\end{aligned}
$$
Similarly one gets $r_u\!\[g\diamond\hbb(\xi,\eta)\]=r_u(g)\,r\!\[\haa(\xi,\eta)\]$ and this is exactly what we needed to show.
Thus the (double) integrated form $r_{u,v}:=r_u\!\rtimes\!v=(r\!\rtimes\!u)\!\rtimes\!v$ is a non-degenerate morphism of $\A_{\a,\hb}^{\aa,\hbb}$\,. Analogously, $r_{v,u}:=r_v\!\rtimes\!u=(r\!\rtimes\!v)\!\rtimes\!u$ will be a nondegenerate morphism of $\A_{\ha,\c}^{\haa,\cc}$\,.

Now we show that every non-degenerate morphism $\mathcal R$ of $\A_{\a,\hb}^{\aa,\hbb}$ in some $C^*$-algebra $\B$ has the form $\mathcal R=(r\!\rtimes\!u)\!\rtimes\!v$ with $(\B,r,u,v)$ as required\,. The reasoning for non-degenerate morphisms $\mathcal S$ of $\A_{\ha,\c}^{\haa,\cc}$ would be similar.

The general theory, applied to the $C^*$-dynamical system $(\A_{\a}^{\aa},\hb,\hbb)$\,, tells us that $\mathcal R=R\!\rtimes\!v$ for some covariant morphism $(\B,R,v)$\,. In its turn, $R$ must have the form $r\!\rtimes\!u$ for a covariant morphism $(\B,r,u)$ of $(\A,\a,\aa)$\,. 
Let us show that $(\B,r,v)$ is a covariant morphism of $(\A,\ha,\haa)$\,. We already know that $v_\xi v_\eta=R\!\[\hbb(\xi,\eta)\]\!v_{\xi\eta}$\,.
So, to prove that $v_\xi v_\eta=r\!\[\haa(\xi,\eta)\]v_{\xi\eta}$ one needs to check that $R\!\[\hbb(\xi,\eta)\]=r\!\[\haa(\xi,\eta)\]$\,. But this has been done above.

On the other hand, by Lemma \ref{peregrin}, one has $\hb_\xi(\delta_{\sf e}\otimes A)=\delta_{\sf e}\otimes\ha_\xi (A)$ for every $\xi\in\hG$ and $A\in\A$\,. Thus one has
$$
v_\xi r(A)v_\xi^*=v_\xi R(\delta_{\sf e}\otimes A)v_\xi^*=R\!\[\hb_\xi(\delta_{\sf e}\otimes A)\]=R\!\[\delta_{\sf e}\otimes\ha_\xi (A)\]=r[\ha_\xi (A)]\,.
$$

Finally we show the right commutation relations between the unitary multipliers $u_x$ and $v_\xi$\,. The game is to deduce this only from the fact that $(\B,R,v)$ and $(\B,r,u)$ are covariant morphisms. 

Note first that elements of the form $\varphi\otimes\psi\otimes A$\,, with $A\in\A$\,, $\varphi\in L^1(\G)$ and $\psi\in L^1(\hG)$ (thus belonging to the algebraic tensor product $L^1(\G)\odot L^1(\hG)\odot\A$\,) are total in $\A_{\a,\hb}^{\aa,\hbb}$\,. Since $\mathcal R=R\!\rtimes\!v=(r\!\rtimes\!u)\!\rtimes\!v$\,, it is easy to check that $\mathcal R(\varphi\otimes\psi\otimes A)=r(A) u[\varphi]v[\psi]$\,, where we used the notations
$u[\varphi]:=\int_\G\!dx\,\varphi(x)u_x$ and $v[\psi]:=\int_{\hG}\!d\xi\,\psi(\xi)v_\xi$\,.
Thus, $\mathcal R$ being nondegenerate, it is enough to show for all the ingredients the identity 
\begin{equation*}\label{ansmak}
v_\xi u_x r(A)u[\varphi] v[\psi]=r[\k(x,\xi)^*]u_x v_\xi\,r(A)u[\varphi]v[\psi]\,.
\end{equation*}
Below, we are going to use the notation $g_x(\cdot):=\varphi(x^{-1}\cdot)\a_x(A)\aa(x,x^{-1}\cdot)\in L^1(\G;\A)$\,.
Using properties of the two covariant representations and axioms of the covariant structure, and recalling that $R=r\!\rtimes\!u$\,, we compute
$$
\begin{aligned}
v_\xi u_x\,r(A)u[\varphi] v[\psi]&=v_\xi r[\a_x(A)]u_x\!\int_\G dz\,\varphi(z)u_z\,v[\psi]\\
&=v_\xi r[\a_x(A)]\int_{\G}\!dy\,\varphi(x^{-1}y)r[\aa(x,x^{-1}y)]u_y v[\psi] \\
&=v_\xi \int_{\G}\!dy\, r\!\left\{\varphi(x^{-1}y)\a_x(A)\aa(x,x^{-1}y)\right\}\!u_y v[\psi] \\
&=v_\xi R(g_x) v[\psi]=R\[\hb_\xi(g_x)\] v_\xi v[\psi] \\
&=\int_{\G}\!dy\,r\!\left\{ \varphi(x^{-1}y)\,\ha_\xi\!\[\a_x(A)\aa(x,x^{-1}y)\]\k(y,\xi)^*\right\}u_y v_\xi v[\psi] \\
&=\int_{\G}\!dy \,\varphi(x^{-1}y)\,r\!\left\{\ha_\xi\!\[\a_x(A)\]\ha_\xi\!\[\aa(x,x^{-1}y)\]\k(y,\xi)^*\right\}u_y v_\xi v[\psi] \\
&\!\overset{(\ref{spoarca})}{=}\!\int_{\G}\!dy \,\varphi(x^{-1}y)\,r\!\left\{\ha_\xi\!\[\a_x(A)\]\k(x,\xi)^*\a_x[\k(x^{-1}y,\xi)^*]\aa(x,x^{-1}y)\right\}u_y v_\xi v[\psi] \\
&\!\overset{(\ref{dihanie})}{=}\!r[\k(x,\xi)^*]\,r\!\left\{\a_x\!\[\ha_\xi(A)\]\right\}\int_{\G}\!dz\,\varphi(z)\,r\big\{\a_x[\k(z,\xi)^*]\aa(x,z)\big\}u_{xz} v_\xi v[\psi]\\
&=r[\k(x,\xi)^*]\,r\!\left\{\a_x\!\[\ha_\xi(A)\]\right\}\int_{\G}\!dz\,\varphi(z)\,r\big\{\a_x[\k(z,\xi)^*]\big\}u_x\,u_z v_\xi v[\psi]\\
&=r[\k(x,\xi)^*]\,r\!\left\{\a_x\!\[\ha_\xi(A)\]\right\}u_x\!\int_{\G}\!dz\,\varphi(z)\,r[\k(z,\xi)^*]u_{z} v_\xi v[\psi]\\
&=r[\k(x,\xi)^*]\,u_x\,r\!\[\ha_\xi(A)\]\int_{\G}\!dz\,\varphi(z)\,r[\k(z,\xi)^*]u_{z} v_\xi v[\psi]\\
&=r[\k(x,\xi)^*]\,u_x\int_{\G}\!dz\,r\big\{\varphi(z)\ha_\xi(A)\k(z,\xi)^*\big\}u_{z} v_\xi v[\psi]\\
&=r[\k(x,\xi)^*]\,u_x\int_{\G}\!dz\,r\big\{\big[\hb_\xi(\varphi\otimes A)\big](z)\big\}u_{z} v_\xi v[\psi]\\
&=r[\k(x,\xi)^*]\,u_x R\big[\hb_\xi(\varphi\otimes A)\big] v_\xi v[\psi]\\
&=r[\k(x,\xi)^*]\,u_x v_\xi R(\varphi\otimes A) v[\psi]\\
&=r[\k(x,\xi)^*]\, u_x v_\xi r(A) u[\varphi]v[\psi]\,,
\end{aligned}
$$
so we are done.
\qed
\end{proof}

Then follows straightforwardly

\begin{Corollary}\label{motoc}
Both $\A_{\a,\hb}^{\aa,\hbb}$ and $\A_{\ha,\c}^{\haa,\cc}$ are bi-products of the covariant structure $\left\{(\A,\k),(\a,\aa),(\ha,\haa)\right\}$\,.
In particular, one has isomorphic $C^*$-algebras
\begin{equation*}\label{zmotoc}
\A^{\laa}_{\la}\cong\A_{\ra}^{\raa}\cong\A_{\a,\hb}^{\aa,\hbb}\cong\A_{\ha,\c}^{\haa,\cc}\,.
\end{equation*}
\end{Corollary}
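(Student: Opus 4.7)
The corollary should be essentially immediate once one assembles the pieces. The plan is to recognize that Theorem \ref{eredluin} is precisely a reformulation of the universal property defining the bi-product in Definition \ref{djin}, and then invoke the uniqueness clause of Proposition \ref{simurg}.

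First, I would make the universal property explicit for $\A_{\a,\hb}^{\aa,\hbb}$. Its canonical covariant morphism comes from the composite embeddings: set $\iota_\A(A):=\delta_{\sf e}\otimes(\delta_{\sf e}\otimes A)$, let $\iota_\G(x):=\delta_{\sf e}\otimes(\delta_x\otimes 1)$ and $\iota_{\hG}(\xi):=\delta_\xi\otimes(\delta_{\sf e}\otimes 1)$, viewed inside $\U\M\big(\A_{\a,\hb}^{\aa,\hbb}\big)$. Using the formulas for the multiplier algebras recorded in Section \ref{fourtundoy} together with Lemma \ref{peregrin}, one checks immediately that $(\A_{\a,\hb}^{\aa,\hbb},\iota_\A,\iota_\G,\iota_{\hG})$ is a covariant morphism of the structure $\{(\A,\k),(\a,\aa),(\ha,\haa)\}$ in the sense of Definition \ref{stirba}. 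Theorem \ref{eredluin} then asserts exactly that every covariant morphism $(\B,r,u,v)$ factors uniquely through $(\iota_\A,\iota_\G,\iota_{\hG})$ via a non-degenerate morphism $s:\A_{\a,\hb}^{\aa,\hbb}\to\M(\B)$ satisfying \eqref{ghiaur}; this is the defining universal property of the bi-product. The same argument, with the roles of $\G$ and $\hG$ interchanged, shows that $\A_{\ha,\c}^{\haa,\cc}$ is also a bi-product.

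Next, Proposition \ref{simurg} guarantees uniqueness of the bi-product up to a canonical isomorphism, and in its proof the bi-product was concretely realized as $\A_\la^\laa=\A\!\rtimes_\la^\laa\!(\G\times\hG)$. Consequently we obtain canonical isomorphisms
\begin{equation*}
\A_\la^\laa\,\cong\,\A_{\a,\hb}^{\aa,\hbb}\,,\qquad\A_\la^\laa\,\cong\,\A_{\ha,\c}^{\haa,\cc}\,.
\end{equation*}
Finally, Remark \ref{strufat} established that $(\la,\laa)\overset{\k}{\sim}(\ra,\raa)$ via the $1$-cochain $\k$, so by Remark \ref{duniath} the corresponding twisted crossed products are canonically isomorphic: $\A_\la^\laa\cong\A_\ra^\raa$, explicitly given at the $L^1$-level by $\overrightarrow F\mapsto\overrightarrow F\kappa^*$. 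Chaining these four isomorphisms yields the announced string.

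There is no genuine obstacle here: the heavy lifting was done in Theorem \ref{eredluin} (the covariance computation at the end of its proof) and in Proposition \ref{eredpizduin}; once the universal property has been established for both iterated crossed products, the corollary is purely formal. The only point that deserves a line of care is the verification that the triple $(\iota_\A,\iota_\G,\iota_{\hG})$ built from the iterated Dirac-multiplier conventions really is a covariant morphism (as opposed to being one only after a twist), and this is exactly what Lemma \ref{peregrin} and the covariance identity $\hb_\xi(\lambda_x)={\sf k}(x,\xi)^\diamond\diamond\lambda_x$ are designed to handle.
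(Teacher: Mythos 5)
Your argument is correct and takes essentially the same route as the paper: the corollary is meant to follow directly from Theorem \ref{eredluin} (read as the universal property of Definition \ref{djin} via the canonical embeddings, with uniqueness supplied by Proposition \ref{simurg}), combined with the exterior-equivalence isomorphism $\A_{\la}^{\laa}\cong\A_{\ra}^{\raa}$ of Remarks \ref{strufat} and \ref{duniath}, exactly as you assemble it. The only slip is notational: since the outer variable of $\A_{\a,\hb}^{\aa,\hbb}$ runs over $\hG$, the canonical embeddings should read $\iota_\A(A)=\delta_\varepsilon\otimes(\delta_{\sf e}\otimes A)$ and $\iota_\G(x)=\delta_\varepsilon\otimes(\delta_x\otimes 1)$, with $\delta_\varepsilon$ rather than $\delta_{\sf e}$ in the outer slot.
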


Even if Corollary \ref{motoc} can be proved directly, it is interesting and useful to have explicit forms of the isomorphisms. Actually one has a commuting diagram of isomorphisms
$$
\begin{diagram}
\node{\A_{\a,\hb}^{\aa,\hbb}} \arrow{e,t}{\Upsilon} \arrow{s,l}{\Phi} \node{\A_{\ha,\c}^{\haa,\cc}} \arrow{s,r}{\Psi}  \\ 
\node{\A_{\ra}^{\raa}} \arrow{e,b}{\Gamma} \node{\A_{\la}^{\laa}}
\end{diagram}
$$
We have already specified $\Gamma$ before
\begin{equation*}\label{witch0}
\[\Gamma\big(\overleftarrow F\big)\]\!(x,\xi):=\overleftarrow F(x,\xi)\,\kappa(x,\xi)\,,
\end{equation*}
as a consequence of exterior equivalence of the twisted actions $(\la,\laa)$ and $(\ra,\raa)$\,. The actions of the other three on the $L^1$-Banach algebras are simply
\begin{equation*}\label{witch1}
[\Upsilon(F)(x)](\xi):=[F(\xi)](x)\,\k(x,\xi)\,,
\end{equation*}
\begin{equation*}\label{witch1}
[\Phi(F)](x,\xi):=[F(\xi)](x)\,,
\end{equation*}
\begin{equation*}\label{witch1}
[\Psi({\sf F})](x,\xi):=[{\sf F}(x)](\xi)\,,
\end{equation*}
and the diagram is already seen to commute. To convince the reader, we are going to exhibit the multiplications and the involutions of the iterated crossed products, at the level of $L^1$-elements. In $\A_{\a,\hb}^{\aa,\hbb}$ one has
$$
\begin{aligned}
&[(F\square G)(\xi)](x)
=\left\{\int_{\hG}\!d\eta\,F(\eta)\diamond\hb_\eta\!\[G(\eta^{-1}\xi)\]\diamond\hbb(\eta,\eta^{-1}\xi)\right\}\!(x)\\
=&\int_{\hG}\!d\eta\left\{F(\eta)\diamond\hb_\eta\!\[G(\eta^{-1}\xi)\]\diamond[\delta_e\otimes\haa(\eta,\eta^{-1}\xi)]\right\}\!(x)\\
=&\int_{\hG}\!d\eta\int_\G\!dy\,[F(\eta)](y)\,\a_y\!\[\!\(\hb_\eta[G(\eta^{-1}\xi)]\diamond[\delta_e\otimes\haa(\eta,\eta^{-1}\xi)]\)\!(y^{-1}x)\]\aa(y,y^{-1}x)\\
\overset{(\ref{dorthonion})}{=}\!\!&\,\int_{\hG}\!d\eta\int_\G\!dy\,[F(\eta)](y)\,\a_y\!\(\hb_\eta[G(\eta^{-1}\xi)](y^{-1}x)\,\a_{y^{-1}x}[\haa(\eta,\eta^{-1}\xi)]\)\aa(y,y^{-1}x)\\
=&\int_{\hG}\!d\eta\int_\G\!dy\,[F(\eta)](y)\,\a_y\big(\ha_\eta[G(\eta^{-1}\xi)(y^{-1}x)]\k(y^{-1}x,\eta)^*]\,\a_{y^{-1}x}[\haa(\eta,\eta^{-1}\xi)]\big)\aa(y,y^{-1}x)\\
=&\int_{\hG}\!d\eta\int_\G\!dy\,[F(\eta)](y)\,(\a_y\circ\ha_\eta)[G(\eta^{-1}\xi)(y^{-1}x)]\,\a_y[\k(y^{-1}x,\eta)^*]\,(\a_y\circ\a_{y^{-1}x})[\haa(\eta,\eta^{-1}\xi)]\big)\aa(y,y^{-1}x)\\
=&\int_{\hG}\!d\eta\int_\G\!dy\,[F(\eta)](y)\,(\a_y\circ\ha_\eta)[G(\eta^{-1}\xi)(y^{-1}x)]\,\a_y[\k(y^{-1}x,\eta)^*]\,\aa(y,y^{-1}x)\,\a_{x}[\haa(\eta,\eta^{-1}\xi)]\,,
\end{aligned}
$$
which should be compared with (\ref{guleras}) and
$$
\begin{aligned}
\[F^{\square}(\xi)\]\!(x)&=\left\{\Delta_{\hG}(\xi^{-1})\,\hbb(\xi,\xi^{-1})^{\diamond}\diamond\hb_\xi\!\[F(\xi^{-1})^{\diamond}\]\right\}\!(x)\\
&=\Delta_{\hG}(\xi^{-1})\left\{[\delta_{\sf e}\otimes\haa(\xi,\xi^{-1})^*]\diamond\hb_\xi\!\[F(\xi^{-1})^{\diamond}\]\right\}\!(x)\\
&=\Delta_{\hG}(\xi^{-1})\,\haa(\xi,\xi^{-1})^*\,\hb_\xi\!\[F(\xi^{-1})^{\diamond}\]\!(x)\\
&=\Delta_{\hG}(\xi^{-1})\,\haa(\xi,\xi^{-1})^*\,\ha_\xi\!\[F(\xi^{-1})^{\diamond}(x)\]\!\k(x,\xi)^*\\
&=\Delta_{\hG}(\xi^{-1})\,\haa(\xi,\xi^{-1})^*\,\ha_\xi\!\left\{\Delta_\G(x^{-1})\aa(x,x^{-1})^*\a_x\!\[F(\xi^{-1})(x^{-1})\]^*\right\}\!\k(x,\xi)^*\\
&=\Delta_{\hG}(\xi^{-1})\,\Delta_\G(x^{-1})\,\haa(\xi,\xi^{-1})^*\,\ha_\xi\!\left[\aa(x,x^{-1})^*\right](\ha_\xi\circ\a_x)\!\[F(\xi^{-1})(x^{-1})^*\]\!\k(x,\xi)^*\\
&\overset{(\ref{dihanie})}{=}\!\Delta_{\hG}(\xi^{-1})\,\Delta_\G(x^{-1})\,\haa(\xi,\xi^{-1})^*\,\ha_\xi\!\left[\aa(x,x^{-1})^*\right]\k(x,\xi)^*\,(\a_x\circ\ha_\xi)\!\[F(\xi^{-1})(x^{-1})^*\]\\
&\overset{(\ref{spoarca})}{=}\!\Delta_{\hG}(\xi^{-1})\,\Delta_\G(x^{-1})\,\haa(\xi,\xi^{-1})^*\,\aa(x,x^{-1})^*\,\a_x[\k(x^{-1},\xi)]\,(\a_x\circ\ha_\xi)\!\[F(\xi^{-1})(x^{-1})^*\]
\end{aligned}
$$
which should be compared with (\ref{gulerash}). In $\A_{\ha,\cc}^{\haa,\tilde\cc}$ one has
$$
\begin{aligned}
\,[({\sf F}\tilde\square{\sf G})(x)]&(\xi)
=\left\{\int_{\G}\!dy\,{\sf F}(y)\,\tilde\diamond\,\c_y\!\[{\sf G}(y^{-1}x)\]\tilde\diamond\,\cc(y,y^{-1}x)\right\}\!(\xi)\\
=&\int_{\G}\!dy\left\{{\sf F}(y)\,\tilde\diamond\,\c_y\!\[{\sf G}(y^{-1}x)\]\tilde\diamond\,[\delta_\varepsilon\otimes\aa(y,y^{-1}x)\right\}\!(\xi)\\
=&\int_{\G}\!dy\int_{\hG}\!d\eta\,[{\sf F}(y)](\eta)\,\ha_\eta\!\[\!\(\c_y[{\sf G}(y^{-1}x)]\,\tilde\diamond\,[\delta_\varepsilon\otimes\aa(y,y^{-1}x)]\)\!(\eta^{-1}\xi)\]\haa(\eta,\eta^{-1}\xi)\\
\overset{(\ref{dorthonion})}{=}\!\!&\,\int_{\G}\!dy\int_{\hG}\!d\eta\,[{\sf F}(y)](\eta)\,\ha_\eta\!\(\c_y[{\sf G}(y^{-1}x)](\eta^{-1}\xi)\,\ha_{\eta^{-1}\xi}[\aa(y,y^{-1}x)]\)\haa(\eta,\eta^{-1}\xi)\\
=&\int_{\G}\!dy\int_{\hG}\!d\eta\,[{\sf F}(y)](\eta)\,\ha_\eta\big(\a_y[{\sf G}(y^{-1}x)(\eta^{-1}\xi)]\k(y,\eta^{-1}\xi)]\,\ha_{\eta^{-1}\xi}[\aa(y,y^{-1}x)]\big)\haa(\eta,\eta^{-1}\xi)\\
=&\int_{\G}\!dy\int_{\hG}\!d\eta\,[{\sf F}(y)](\eta)\,(\ha_\eta\circ\a_y)[\G(y^{-1}x)(\eta^{-1}\xi)]\,\ha_\eta[\k(y,\eta^{-1}\xi)]\,(\ha_\eta\circ\ha_{\eta^{-1}\xi})[\aa(y,y^{-1}x)]\big)\haa(\eta,\eta^{-1}\xi)\\
=&\int_{\G}\!dy\int_{\hG}\!d\eta\,[{\sf F}(y)](\eta)\,(\ha_\eta\circ\a_y)[{\sf G}(y^{-1}x)(\eta^{-1}\xi)]\,\ha_\eta[\k(y,\eta^{-1}\xi)]\,\haa(\eta,\eta^{-1}\xi)\,\ha_{\xi}[\aa(y,y^{-1}x)]
\end{aligned}
$$
which should be compared with (\ref{gulas}), and
$$
\begin{aligned}
\[{\sf F}^{\tilde\square}(x)\]\!(\xi)&=\left\{\Delta_\G(x^{-1})\,\cc(x,x^{-1})^{\tilde\diamond}\,\tilde\diamond\,\c_x\!\[{\sf F}(x^{-1})^{\tilde\diamond}\]\right\}\!(\xi)\\
&=\Delta_\G(x^{-1})\left\{[\delta_\varepsilon\otimes\aa(x,x^{-1})^*]\,\tilde\diamond\,\c_x\!\[{\sf F}(x^{-1})^{\tilde\diamond}\]\right\}\!(\xi)\\
&=\Delta_\G(x^{-1})\,\aa(x,x^{-1})^*\,\c_x\!\[{\sf F}(x^{-1})^{\tilde\diamond}\]\!(\xi)\\
&=\Delta_\G(x^{-1})\,\aa(x,x^{-1})^*\,\a_x\!\[{\sf F}(x^{-1})^{\tilde\diamond}(\xi)\]\kappa(x,\xi)\\
&=\Delta_\G(x^{-1})\,\aa(x,x^{-1})^*\,\a_x\!\left\{\Delta_{\hG}(\xi^{-1})\haa(\xi,\xi^{-1})^*\ha_\xi[{\sf F}(x^{-1})(\xi^{-1})^*]\right\}\kappa(x,\xi)\\
&=\Delta_\G(x^{-1})\,\Delta_{\hG}(\xi^{-1})\,\aa(x,x^{-1})^*\,\a_x\!\left[\haa(\xi,\xi^{-1})^*\](\a_x\circ\ha_\xi)\!\[{\sf F}(x^{-1})(\xi^{-1})^*\]\kappa(x,\xi)\\
&\overset{(\ref{dihanie})}{=}\!\Delta_\G(x^{-1})\,\Delta_{\hG}(\xi^{-1})\,\aa(x,x^{-1})^*\a_x\!\left[\haa(\xi,\xi^{-1})^*\]\kappa(x,\xi)\,(\ha_\xi\circ\a_x)[{\sf F}(x^{-1})(\xi^{-1})^*]\\
&\overset{(\ref{poarca})}{=}\!\Delta_\G(x^{-1})\,\Delta_{\hG}(\xi^{-1})\,\aa(x,x^{-1})^*\haa(\xi,\xi^{-1})^*\,\ha_\xi\!\[\k(x,\xi^{-1})^*\](\ha_\xi\circ\a_x)\[{\sf F}(x^{-1})(\xi^{-1})^*\]\\
\end{aligned}
$$
which should be compared with (\ref{gulash}).

\medskip
\begin{Remark}\label{boilup}
{\rm If one tries to show directly that $\Upsilon$ is multiplicative, after a short computation using (\ref{dihanie}), he will realize that this is equivalent to the identity (\ref{vrajitroaca}). 
}
\end{Remark}

\medskip
\begin{Remark}\label{cazan}
{\rm Naturally, by the same mechanism, the second generation $C^*$-algebras can also be inflated to new covariant structures $\left\{\big(\A_{\a,\hb}^{\aa,\hbb},{\sf k}^2\big),(\b^{2},\bb^{2}),(\hb^{2},\hbb^{2})\right\}$ and $\left\{\big(\A_{\ha,\c}^{\haa,\cc},\tilde{\sf k}^2\big),(\c^{2},\cc^{2}),(\tilde\c^{2},\tilde\cc^{2})\right\}$\,. Then the isomorphism $\Upsilon$ can be upgraded to an isomorphism in a category of covariant structures, that can be easily defined. Similarly, the twisted crossed products $\A_{\la}^{\laa}$ and $\A_{\ra}^{\raa}$ with product group $\G\times\hG$ also have their natural covariant structures and the isomorphisms $\Gamma,\Phi$ and $\Psi$ have their interpretation in this category. Since many formulas should be written down and also having in view a subsequent work, we shall not pursue all these here.
}
\end{Remark}

\section{Takai duality and other examples}\label{corsandui}

\begin{Example}\label{lapusa}
{\rm We have seen that one realization of the bi-product $\A_{(\a,\ha)}^{(\aa,\haa)}$ is the twisted crossed product $\A_{\la}^{\laa}:=\A\!\rtimes_{\la}^{\laa}(\G\times\hG)$\,. Applying to this one known results \cite{Sm1}, it follows that the bi-product is commutative if and only if $\A,\G,\hG$ are commutative, $\a$ and $\ha$ are trivial and $\la$ is (essentially) symmetric. But $\la$ is symmetric if and only if $\aa$ and $\haa$ are symmetric and $\k=1$\,.
}
\end{Example}

\medskip
\begin{Example}\label{somnostrika}
{\rm If $\k=1$ the two actions $\a$ and $\ha$ commute, the elements $\haa(\xi,\eta)$ are fixed points of $\a$\,, the elements $\aa(x,y)$ are fixed points of $\ha$\,, one has $\aa(x,y)\haa(\xi,\eta)=\haa(\xi,\eta)\aa(x,y)$ and the twisted actions $(\la,\laa)$ and $(\ra,\raa)$ coincide.
The isomorphism between $\A_{\a,\hb}^{\aa,\hbb}$ and $\A_{\ha,\c}^{\haa,\cc}$ is basically a flip of the variables. The twisted actions $(\hb,\hbb)$ and $(\c,\cc)$ are non-trivial only in the $\A$-part of the twisted crossed products. 
}
\end{Example}

\medskip
\begin{Example}\label{somnorika}
{\rm If the initial two actions are not twisted, i.e. $\aa=1$ and $\haa=1$\,, then $\k$ must verify for all $x,y,\xi,\eta$
\begin{equation}\label{visila}
\k(x,\xi\eta)=\k(x,\xi)\ha_\xi[\k(x,\eta)]\quad{\rm and}\quad\k(xy,\xi)^*=\k(x,\xi)^*\a_x[\k(y,\xi)^*]\,.
\end{equation}
This means that $\kappa(x,\cdot):\hG\to\U\M(\A)$ and $\kappa(\cdot,\xi)^*:\G\to\U\M(\A)$ are crossed morphisms. One has
\begin{equation}\label{cracacila}
\laa\big((x,\xi),(y,\eta)\big)=\ha_\xi[\k(x,\eta)]\,,\quad\raa\big((x,\xi),(y,\eta)\big)=\a_x[\k(y,\xi)^*]\,.
\end{equation} 
The $\A_{\la}^{\laa}$-realization of the bi-product $\A_{(\a,\ha)}^{(\aa,\haa)}$ is still twisted and can be very complicated. The iterated crossed products $\A_{\a,\hb}^{\aa,\hbb}\equiv\A_{\a,\hb}$ and $\A_{\ha,\c}^{\haa,\cc}\equiv\A_{\ha,\c}$ are only constructed with untwisted actions, but the actions $\hb,\c$\,, besides the initial $\ha,\a$\, also contain the coupling function $\k$\,.
}
\end{Example}

\medskip
\begin{Example}\label{somnorila}
{\rm Even when both twisted actions are trivial, the bi-product remembers the $C^*$-algebra $\A$ and the "coupling" between the groups $\G$ and $\hG$\,. For $\left\{(\A,\k),({\sf id},1),({\sf id},1)\right\}$ one gets $\la={\sf id}$ but
\begin{equation}\label{cacacila}
\laa\big((x,\xi),(y,\eta)\big)=\k(x,\eta)
\end{equation} 
is still non-trivial. Relations (\ref{poarca}) and (\ref{spoarca}) become in this case (respectively)
$$
\k(x,\xi\eta)=\k(x,\xi)\k(x,\eta)\quad{\rm and}\quad\k(xy,\xi)=\k(y,\xi)\k(x,\xi)\,.
$$ 
For Abelian $\A$, twisted crossed products $\A\!\rtimes_{\sf id}^{\laa}\!{\sf H}$ with trivial action $\la$ (but with general $2$-cocycle $\laa$) have been studied in depth in \cite{Sm1,Sm2,EW}. It is worth mentioning that our $\laa$ is symmetric only if $\k=1$\,. The second generation iterated twisted crossed products have the form 
$$
(\A\!\rtimes_{\sf id}\G)\!\rtimes_{\hb^\bullet}\!\hG\cong[\A\otimes C^*(\G)]\!\rtimes_{\hb^\bullet}\!\hG\quad{\rm and}\quad
(\A\!\rtimes_{\sf id}\hG)\!\rtimes_{\c^\bullet}\!\G\cong[\A\otimes C^*(\hG)]\!\rtimes_{\c^\bullet}\!\G\,,
$$
where essentially $[\hb^\bullet_\xi(f)](x):=f(x)\kappa(x,\xi)^*$ and $[c^\bullet_x({\sf f})](\xi):={\sf f}(\xi)\k(x,\xi)$\,.

If $\k$ is $\T$-valued, $\laa$ is a bi-character. It is easy to see that we get
\begin{equation}\label{cascacila}
\A_{({\sf id},1)}^{({\sf id},1)}\equiv\A_{\sf id}^{\laa}\cong\A\otimes C^*_{\k}(\G\times\hG)\,.
\end{equation}
We denoted by $C^*_{\k}(\G\times\hG)$ the twisted group algebra of ${\sf H}:=\G\times\hG$ corresponding to the $2$-cocycle ${\sf H}\times{\sf H}\to\T$ given by (\ref{cacacila}). 
More generally, we can consider the covariant structure $\left\{(\A,\k),({\sf id},\aa),({\sf id},\haa)\right\}$\,, where $\aa$ and $\haa$ are multipliers (they take values in $\T$)\,. If $\k$ is also $\T$-valued, then 
\begin{equation}
\A_{({\sf id},{\sf id})}^{(\aa,\haa)}\cong \A\otimes C^*_{\laa}(\G\times\hG)\,.
\end{equation}
}
\end{Example}

\medskip
\begin{Example}\label{pasarila}
{\rm We shall describe now briefly how {\it a twisted version of Takai's duality result for Abelian groups} follows from our isomorphism $\A_{\a,\hb}^{\aa,\hbb}\cong\A_{\ha,\c}^{\haa,\cc}$\,, which is written with full notations 
\begin{equation}\label{harrypotter}
\(\A\!\rtimes_\a^\aa\!\G\)\!\rtimes_\hb^\hbb\hG\cong\big(\A\!\rtimes_\ha^\haa\!\hG\big)\!\rtimes_\c^{\cc}\!\G\,.
\end{equation}
Let us suppose that the group $\G$ is commutatative (in additive notations) and $\hG:=\widehat\G$ is its Pontryagin dual. As coupling function we choose the natural duality $\k(x,\xi)\equiv\k^0(x,\xi):=\xi(x)$\,. Also assume that the initial twisted action of $\widehat\G$ is trivial: $(\ha,\haa)=({\sf id},1)$\,; then 
the $2$-cocycle $\hbb$ is trivial and the action $\hb$ reduces to the standard dual action given by $\big[\hat\b^0_\xi(f)\big](x):=\overline{\xi(x)}f(x)$\,.
The purpose is to express the double twisted crossed product $\(\A\!\rtimes_\a^\aa\!\G\)\!\rtimes_{\hat\b^0}\!\widehat\G$ in a simple familiar form, using the r.h.s. of (\ref{harrypotter}).

There are well-known canonical isomorphisms $\A\!\rtimes_{\sf id}^1\!\widehat\G\cong\A\otimes C^*\!\big(\widehat\G\big)\cong\A\otimes C_0(\G)$\,, the second one being given by a partial Fourier transform. The twisted action $(\c,\cc)$ given by (\ref{pippin1}) and (\ref{merry1}) is carried to $(\a\otimes{\sf t},\aa\otimes 1)$\,, where $[{\sf t}_x(\varphi)](y):=\varphi(y+x)$ is the action of $\G$ on $C_0(\G)$ by translations. If one finds an isomorphism
\begin{equation}\label{semifinala}
[\A\otimes C_0(\G)]\rtimes_{\a\otimes{\sf t}}^{\aa\otimes 1}\G\cong\A\otimes[C_0(\G)\!\rtimes_{\sf t}\!\G]\,,
\end{equation}
then using the standard isomorphism between $C_0(\G)\!\rtimes_{\sf t}\!\G$ and the $C^*$-algebra $\mathbb K[L^2(\G)]$ of all compact operators in the Hilbert space $L^2(\G)$ one finally gets the desired result
\begin{equation}\label{takai}
\(\A\!\rtimes_\a^\aa\!\G\)\!\rtimes_{\hat\b^0}\!\widehat\G\cong\A\otimes\mathbb K[L^2(\G)]\,.
\end{equation}
Using some notational abuse, the isomorphism (\ref{semifinala}) is given by
$$
[\Theta(F)](z,x):=\a_x[F(z,x)]\aa(x,z)\,.
$$
We refer to \cite[Sect. 7.1]{Wi} for a more careful discussion of the case $\aa=1$\,.

The conclusion is that in this case the bi-product associated to the covariant structure $\{(\A,\k^0),(\a,\aa),({\sf id},1)\}$ is stable equivalent to the initial $C^*$-algebra $\A$\,. Recalling the realizations $\A_{\la}^{\laa}$ and $\A_{\ra}^{\raa}$ of this bi-product, we get more isomorphisms that could be of some interest. In the present given situation, for example, one has
$$
\la_{\!(x,\xi)}=\a_x\,,\quad\laa\big((x,\xi),(y,\eta)\big)=\eta(x)\aa(x,y)\,.
$$
For this twisted action one gets $\A\!\rtimes_{\la}^{\laa}\!(\G\times\widehat\G)\cong\A\otimes\mathbb K[L^2(\G)]$\,.

All the isomorphisms we described above are shadows of isomorphisms of covariant systems, as indicated in Remark \ref{cazan}.
}
\end{Example}

\bigskip
{\bf Acknowledgements:}
The authors are supported by {\it N\'ucleo Milenio de F\'isica Matem\'atica RC120002}. M. M. acknowledges support from the Fondecyt Project 1120300.

\bigskip


\bigskip
\bigskip
\medskip
{\bf Address}

\medskip
Departamento de Matem\'aticas, Universidad de Chile,

Las Palmeras 3425, Casilla 653, Santiago, Chile

\emph{E-mail:} h.bustos1988@gmail.com

\emph{E-mail:} mantoiu@uchile.cl

\end{document}